\newtheorem{theorem}{Theorem}[section]
\newtheorem{proposition}[theorem]{Proposition}
\newtheorem{lemma}[theorem]{Lemma}
\newtheorem{corollary}[theorem]{Corollary}
\newtheorem{definition}[theorem]{Definition}
\newtheorem{remark}[theorem]{Remark}
\newtheorem{remarks}[theorem]{Remarks}
\newcommand{\CC}{\ensuremath{\mathbb{C}}} 
\newcommand{\NN}{\ensuremath{\mathbb{N}}} 
\newcommand{\ZZ}{\ensuremath{\mathbb{Z}}} 
\newcommand{\lspan}{\mathop{\mathrm{span}}} 
\newcommand{\nrm}[1][\cdot]{\Vert #1 \Vert} 
\newcommand{\bignrm}[1][\cdot]{\left\Vert #1 \right\Vert} 
\newcommand{\ran}{\mathop{\mathrm{ran}}} 
\newcommand{\supp}{\mathop{\mathrm{supp}}} 
\newcommand{\integ}[3][]{\ensuremath{\int_{#1}#2\,d#3}} 
\newcommand{\id}{\mathrm{id}} 
\newcommand\esssup{\mathop{\mathrm{esssup}}} 
\newcommand{\diag}{\mathrm{diag}} 
\newcommand{\dualp}[2]{\left\langle #1 , #2 \right\rangle} 
\newcommand{\HH}{\ensuremath{\mathcal{H}}} 
\newcommand{\ip}[2]{\left\langle #1 , #2 \right\rangle} 
\newcommand{\inv}{^{-1}} 
\newcommand{\HS}{\mathrm{HS}} 
\newcommand{\SL}{\mathrm{SL}} 
\newcommand{\Aut}{\mathop{\mathrm{Aut}}} 
\newcommand{\CA}{$C^*$-algebra}
\newcommand{\vNA}{von Neumann algebra}
\newcommand{\Bd}[1][\HH]{\ensuremath{\mathcal{B}(#1)}} 
\newcommand{\unA}{1_A} 
\newcommand{\vN}[1][G]{\ensuremath{\mathrm{vN}(#1)}} 
\newcommand{\rgCst}[1][G]{\ensuremath{C^*_r(#1)}} 
\newcommand{\cros}[3]{\ensuremath{#1 \rtimes_{#3} \! #2}} 
\newcommand{\rcros}[3]{\ensuremath{#1 \rtimes_{{#3},r}  #2}} 
\newcommand{\repcros}[4]{\ensuremath{#1 \rtimes_{{#3},#4}  #2}} 
\newcommand{\thcrs}{\repcros{A}{G}{\alpha}{\theta}} 
\newcommand{\crs}{\cros{A}{G}{\alpha}} 
\newcommand{\rcrs}{\rcros{A}{G}{\alpha}} 
\newcommand{\vNcros}[3]{\ensuremath{#1 \rtimes_{#3}^{\mathrm{vN}} \! #2}} 
\newcommand{\wstrepcros}[4]{\ensuremath{#1 \rtimes_{{#3},#4}^{\mathrm{w}^*}  #2}} 
\newcommand{\wstthcrs}{\wstrepcros{A}{G}{\alpha}{\theta}} 
\newcommand{\EE}{\ensuremath{\mathcal{E}}} 
\newcommand{\CB}[1][A]{\ensuremath{\mathcal{CB}(#1)}} 
\newcommand{\CBw}[1][A]{\ensuremath{\mathcal{CB}_\sigma(#1)}} 
\newcommand{\cb}{{\mathrm{cb}}} 
\newcommand{\falg}[1][G]{\ensuremath{A(#1)}} 
\newcommand{\cfalg}[4][]{\ensuremath{\mathcal{A}^{#1}(#2,#3,#4)}} 
\newcommand{\cfal}{\cfalg[\theta]{A}{G}{\alpha}} 
\newcommand{\Aa}{\mathcal{A}} 
\newcommand{\Mult}{\mathrm{M}} 
\newcommand{\Mcb}{{\mathrm{M}^{\mathrm{cb}}}} 
\newcommand{\HSmults}[3]{\ensuremath{\mathfrak{S}(#1,#2,#3)}} 
\newcommand{\HSm}{\HSmults{A}{G}{\alpha}} 
\title{Weak amenability for dynamical systems}
\author[A. McKee]{Andrew McKee}
\address{Department of Mathematical Sciences, Chalmers University of Technology and the University of Gothenburg, Gothenburg SE-412 96, Sweden}
\email{amckee240@qub.ac.uk}
\begin{document}

\bibliographystyle{plain}

\begin{abstract}
Using the recently developed notion of a Herz--Schur multiplier of a $C^*$-dynamical system we introduce weak amenability of $C^*$- and $W^*$-dynamical systems. As a special case we recover Haagerup's characterisation of weak amenability of a discrete group. We also consider a generalisation of the Fourier algebra to crossed products and study its multipliers.
\end{abstract}

\subjclass[2010]{Primary: 46L55, Secondary: 46L05}

\keywords{Schur multiplier; C*-crossed products; approximation properties; weak amenability}

\maketitle

\section{Introduction}
\label{sec:intro}

\noindent
Among the many characterisations of amenability of a locally compact group $G$ is Leptin's Theorem~\cite{Lep68}: $G$ is amenable if and only if the Fourier algebra of $G$ has a bounded approximate identity.
The idea to weaken the latter condition, by requiring the approximate identity to be bounded in a different norm, goes back to Haagerup~\cite{Haaun16}.
Following this, Cowling--Haagerup~\cite{CH89} formally defined weak amenability, explored some equivalent conditions, and introduced the Cowling--Haagerup (or weak amenability) constant. This constant has been computed for a large number of groups --- see Brown--Ozawa~\cite[Theorem 12.3.8]{BrO08} and the references given by Knudby~\cite{Knth14}.
An overview of the literature surrounding weak amenability can be found in the thesis of Knudby~\cite[Section 5]{Knth14}.

Weak amenability is an example of a property defined in terms of functions on a group which can be characterised by an approximation property of the group \vNA\ and/or group \CA\ (see \cite[Chapter 12]{BrO08} for several examples of such properties); the aim of this paper is to extend this idea to crossed products.
A \CA\ $A$ is said to have the \emph{completely bounded approximation property} (CBAP) if there exists a net $(T_\gamma)$ of finite rank completely bounded maps on $A$ such that $T_\gamma \to \id_A$ in the point-norm topology and $\sup_\gamma \nrm[T_\gamma]_\text{cb} = C < \infty$. The infimum of all such constants $C$ is denoted $\Lambda_\text{cb}(A)$.
Similarly, a \vNA\ $M$ is said to have the \emph{weak* completely bounded approximation property} (weak* CBAP) if there exists a net $(R_\gamma)$ of ultraweakly continuous, finite rank, completely bounded maps on $M$ such that $R_\gamma \to \id_M$ in the point-weak* topology and $\sup_\gamma \nrm[R_\gamma]_\text{cb} = C < \infty$; again, the infimum of all such constants $C$ is denoted $\Lambda_\text{cb}(M)$.
A locally compact group $G$ is called \emph{weakly amenable} if there exists a net of compactly supported Herz--Schur multipliers on $G$, uniformly bounded in the Herz--Schur multiplier norm, converging uniformly to 1 on compact sets. Haagerup~\cite[Theorem 2.6]{Haaun16} proved that a discrete group is weakly amenable if and only if the reduced group \CA\ has the completely bounded approximation property, if and only if the group \vNA\ has the weak* completely bounded approximation property.

In this paper we define weak amenability of $C^*$- and $W^*$-dynamical systems and characterise a weakly amenable system in terms of the completely bounded approximation property of the corresponding crossed product. The results in this direction, Theorems~\ref{th:systemweaklyamenableiffnormclosedcrossedprodhasCBAP} and \ref{th:characterisationweakamenabilityandCBAPcrossedproducts}, may be seen as a generalisation of Haagerup's result above.
Haagerup--Kraus~\cite[Section 3]{HKr94} have studied $W^*$-dynamical systems under the assumption that $G$ is weakly amenable; Proposition~\ref{pr:weakamenabilitycovarianceequivalence} was motivated by their Theorem~3.2(b) and Remark~3.10.

In Section~\ref{sec:prelims} we review the definitions and results surrounding the notion of a Herz--Schur multiplier of a $C^*$-dynamical system. 
Section~\ref{sec:fourieralgebras} is motivated by the description of Herz--Schur multipliers as completely bounded multipliers of the Fourier algebra; we view the predual of (the enveloping von~Neumann algebra of) the reduced crossed product as consisting of vector-valued functions on the group, and describe the completely bounded multipliers of this space as certain Herz--Schur multipliers of the associated dynamical system. 
In Section~\ref{sec:weakamenability} we define weak amenability of $C^*$- and $W^*$-dynamical systems, and characterise in terms of the completely bounded approximation property of the associated crossed product.

\section{Preliminaries}
\label{sec:prelims}

\noindent
In this section we review the definitions and results of \cite{MTT18} required later, as well as establishing notation.
Throughout, $G$ will denote a second-countable, locally compact, topological group, endowed with left Haar measure $m$; integration on $G$, with respect to $m$, over the variable $s$, is simply denoted $ds$. 
Write $\lambda^G$ for the left regular representation of $G$ on $L^2(G)$, and the corresponding representation of $L^1(G)$.
The reduced group \CA\ \rgCst\ and group \vNA\ \vN\ of $G$ are, respectively, the closure of $\lambda^G(L^1(G))$ in the norm and weak* topology of $\Bd[L^2(G)]$; we also have $\vN = \{ \lambda^G_s : s \in G \}''$.

Let $A$ be a unital, separable, \CA, which unless otherwise stated will be considered as a $C^*$-subalgebra of \Bd[\HH_A], where $\HH_A$ denotes the Hilbert space of the universal representation of $A$. Let $\alpha : G \to \Aut(A)$ be a group homomorphism which is continuous in the point-norm topology, {\it i.e.}\ for all $a \in A$ the map $s \mapsto \alpha_s(a)$ is continuous from $G$ to $A$; in short, consider a $C^*$-dynamical system $(A,G,\alpha)$.
Let $\theta$ be a faithful representation of $A$ on $\HH_\theta$ and define representations of $A$ and $G$ on $L^2(G,\HH_\theta)$ by
\begin{equation*}\label{eq:picovariantpair}
    \big( \pi^\theta(a)\xi \big)(s) := \theta(\alpha_{s\inv})(a) \big(\xi(s) \big), \quad (\lambda^\theta_t \xi)(s) := \xi(t\inv s) ,
\end{equation*}
for all $a\in A,\ s,t\in G,\ \xi \in L^2(G,\HH_\theta)$. 
It is easy to check that
\[
    \pi^\theta \big(\alpha_t(a) \big) = \lambda^\theta_t \pi^\theta(a) (\lambda^\theta_t)^* ,\quad a \in A,\ t \in G .
\]
The pair $(\pi^\theta,\lambda^\theta)$ is therefore a \emph{covariant representation} of $(A,G,\alpha)$. 
Thus we obtain a representation $\pi^\theta \rtimes \lambda^\theta$ of the Banach $*$-algebra $L^1(G,A)$ on $\Bd[L^2(G,\HH_\theta)]$ given by
\[
    \pi^\theta \rtimes \lambda^\theta (f) := \integ[G]{\pi^\theta \big(f(s) \big)\lambda^\theta_s}{s} ,\quad f \in L^1(G,A) .
\]
The \emph{reduced crossed product} of $A$ by $G$ is defined as the closure of $(\pi^\theta \rtimes \lambda^\theta)(L^1(G,A))$ in the operator norm of $\Bd[L^2(G,\HH_\theta)]$; it does not depend on the choice of faithful representation $\theta$ so we will often omit the superscript $\theta$ from our notation, and denote the reduced crossed product by \rcrs, writing $\thcrs$ when we wish to emphasise the choice of $\theta$. 
The \emph{full crossed product} of $A$ by $G$, denoted $\crs$, is the \CA\ obtained by completing $L^1(G,A)$ in the universal norm 
\[
    \nrm[f] := \sup\{ \nrm[\rho \rtimes \tau(f)] : \text{ $(\rho , \tau)$ is a covariant representation of $(A,G,\alpha)$ } \} .
\]
We refer to Pedersen~\cite[Chapter 7]{Ped79} and Williams~\cite{Wil07} for the details of these constructions.

In \cite{MTT18} the present author, with Todorov and Turowska, introduced and studied Herz--Schur multipliers of a $C^*$-dynamical system, extending the classical notion of a Herz--Schur multiplier (see de~Canni\`ere--Haagerup~\cite{dCH85}). We now recall the definitions and results needed here; the classical definitions  of Herz--Schur multipliers are the special case $A=\CC$ of the definitions below.
A bounded function $F : G \to \Bd[A]$ will be called \emph{pointwise-measurable} if, for every $a \in A$, the map $s \mapsto F(s)(a)$ is a weakly-measurable function from $G$ to $A$.
For each $f \in L^1(G,A)$ define $F \cdot f (s) := F(s)(f(s))$ ($s\in G$). If $F$ is bounded and pointwise-measurable then $F \cdot f$ is weakly measurable and $\nrm[F \cdot f]_1 \leq \sup_{s \in G}\nrm[F(s)] \nrm[f]_1$, so $F \cdot f \in L^1(G,A)$ for every $f \in L^1(G,A)$.

\begin{definition}\label{de:HSmultiplier}
A bounded, pointwise-measurable, function $ F : G \to \CB$ will be called a \emph{Herz--Schur ($A,G,\alpha$)-multiplier} if the map
\[
    S_F : (\pi \rtimes \lambda)(L^1(G,A)) \to (\pi \rtimes \lambda)(L^1(G,A)) ;\ S_F \big( (\pi \rtimes \lambda)(f) \big) := (\pi \rtimes \lambda)(F \cdot f)
\]
is completely bounded; if this is the case then $S_F$ has a unique extension to a completely bounded map on \rcrs. The set of all Herz--Schur $(A,G,\alpha)$-multipliers is an algebra with respect to the obvious operations; we denote it by \HSm\ and endow it with the norm $\nrm[F]_\HS := \nrm[S_F]_\cb$.
\end{definition}

Since the closure of $(\pi^\theta \rtimes \lambda^\theta)(L^1(G,A))$ is isomorphic to \rcrs\ (see {\it e.g.}\ \cite[Theorem 7.7.5]{Ped79}) it follows that $F$ is a Herz--Schur $(A,G,\alpha)$-multiplier if and only if the map
\[
    S_F^\theta : (\pi^\theta \rtimes \lambda^\theta)(f) \mapsto (\pi^\theta \rtimes \lambda^\theta)(F \cdot f) , \quad f \in L^1(G,A),
\]
is completely bounded, so Herz--Schur $(A,G,\alpha)$-multipliers can be defined using any faithful representation of $A$ \cite[Remark 3.2(ii)]{MTT18}.
Let $\alpha^\theta : G \to \Aut(\theta(A))$ be given by $\alpha^\theta_t(\theta(a)) := \theta(\alpha_t(a))$ ($t \in G,\ a \in A$); note that if $\alpha$ is continuous in the point-norm topology then so is $\alpha^\theta$. We say $\alpha$ is a \emph{$\theta$-action} if $\alpha^\theta$ extends to a weak*-continuous automorphism of $\theta(A)''$ such that the map $t \mapsto \alpha^\theta_t(x)$ is weak*-continuous for each $x \in \theta(A)''$.
We will need to work with $\overline{\thcrs}^{\rm w^*}$, which we denote by $\wstthcrs$.

Let $M$ be a von~Neumann algebra on a Hilbert space $\HH$, and $\beta : G \to \Aut(M)$ a group homomorphism which is continuous in the point-weak* topology; then the triple $(M,G,\beta)$ is called a \emph{$W^*$-dynamical system}. 
Defining representations $\pi$ and $\lambda$ of $M$ and $G$ respectively on $L^2(G,\HH)$ by the same formulae as above gives a covariant pair of representations $(\pi , \lambda)$ of $(M , G ,\beta)$, with $\pi$ normal. The \emph{(von~Neumann) crossed product} of $(M , G ,\beta)$, denoted $\vNcros{M}{G}{\beta}$, is the von~Neumann algebra generated by $\pi(M)$ and $\lambda(G)$ on $L^2(G,\HH)$. 
See Takesaki~\cite[Chapter X]{Tak03} for more on this construction.

Classically, $u : G \to \CC$ is called a Herz--Schur multiplier if $u$ is a completely bounded multiplier of the Fourier algebra of $G$ (the Fourier algebra of $G$, \falg, will be defined in Section~\ref{sec:fourieralgebras}) {\it i.e.}\ $uv \in \falg$ for all $v \in \falg$ and the map
\[
    m_u : A(G) \to A(G) ;\ m_u(v) := uv , \quad v \in A(G) ,
\]
is completely bounded; the space of such functions is denoted $\Mcb\falg$.
Bo\.zejko--Fendler~\cite{BF84} discuss several equivalent definitions of Herz--Schur multipliers, including: Herz--Schur multipliers on $G$ coincide with the completely bounded multipliers of \vN. One can further show that if $u$ is a Herz--Schur multiplier of $G$ then $m_u^* : \vN \to \vN$ leaves \rgCst\ invariant.
In defining Herz--Schur $(A,G,\alpha)$-multipliers we took the reverse approach, defining first a map on \rcrs. If the dynamical system in question is $(\CC,G,1)$ then the corresponding crossed product is precisely \rgCst, so (identifying $\CB[\CC]$ with \CC) we have that $u$ is a Herz--Schur ($\CC,G,1$)-multiplier if and only if $u$ is a Herz--Schur multiplier.
The goal of Section~\ref{sec:fourieralgebras} is to introduce a space for a $C^*$-dynamical system $(A,G,\alpha)$ which generalises the Fourier algebra of a locally compact group, and identify Herz--Schur $(A,G,\alpha)$-multipliers with the completely bounded `multipliers' of this space.
Unlike the classical case it is not clear if the map $S_F$ corresponding to $F \in \HSm$ extends to the weak*-closure of \rcrs, so we make the following definition.

\begin{definition}\label{de:thetamultiplier}
Let $(\theta , \HH_\theta)$ be a faithful representation of $A$ on a separable Hilbert space.
A bounded function $F : G \to \CB[A]$ will be called a \emph{$\theta$-multiplier} of $(A,G,\alpha)$ if the map
\[
    S_F^\theta : \pi^\theta(a)\lambda^\theta_t \mapsto \pi^\theta \big( F(t)(a) \big) \lambda^\theta_t , \quad a \in A,\ t \in G,
\]
has an extension to a completely bounded weak*-continuous map on \wstthcrs.
\end{definition}

\noindent
Note that \cite[Remark 3.4]{MTT18} shows that Herz--Schur $\theta$-multipliers of $(A,G,\alpha)$ act in the same way as Herz--Schur $(A,G,\alpha)$-multipliers, when viewed through a weak*-continuous functional.
To simplify notation I will often omit the superscript $\theta$ from the maps $S_F$ associated to the multipliers defined above; it will be clear from the presence/absence of $\theta$ elsewhere in the notation where $S_F$ is acting.

The following result \cite[Theorem 3.8]{MTT18} provides a useful characterisation of Herz--Schur $(A,G,\alpha)$-multipliers, generalising the classical transference theorem (see {\it e.g.}\ \cite{BF84}).

\begin{theorem}\label{th:transference}
Let $(A,G,\alpha)$ be a $C^*$-dynamical system with $A \subseteq \Bd$, and let $F : G \to \CB$ be a bounded, pointwise-measurable, function.
The following are equivalent:
\begin{enumerate}[i.]
    \item $F$ is a Herz--Schur $(A,G,\alpha)$-multiplier;
    \item there exist a separable Hilbert space $\HH_\rho$, a non-degenerate representation $\rho : A \to \Bd[\HH_\rho]$ and $V,W \in L^\infty(G,\Bd[\HH,\HH_\rho])$ such that 
    \[
        \mathcal{N}(F)(s,t)(a) := \alpha_{t\inv} ( F(ts\inv) ( \alpha_t(a) ) ) = W(t)^* \rho(a) V(s) .
    \]
\end{enumerate}
\end{theorem}

\section{Fourier space of a crossed product} \label{sec:fourieralgebras}

\noindent
In this section we develop a space for the crossed product which is analogous to the Fourier algebra in the setting of group \CA s and \vNA s, and study the multipliers of this space. To motivate this discussion and fix notation let us first recall some facts about the Fourier algebra of a locally compact group $G$.
The Fourier algebra of $G$, introduced by Eymard~\cite{Eym64}, denoted \falg, is the space of coefficients of the left regular representation; that is, the space of functions $u : G\to \CC$ of the form
\[
    u(t) = \ip{\lambda^G_t\xi}{\eta} , \quad t\in G,\ \xi,\eta\in L^2(G) .
\]
The linear space defined in this way becomes an algebra under pointwise multiplication, and turns out to be the predual of the group \vNA\ \vN. 
Bo\.{z}ejko--Fendler~\cite{BF84} proved that the space $\Mcb \falg$ is isometrically isomorphic to the space of Herz--Schur multipliers of $G$, so they are treated as the same space.

Recall that $A$ denotes a unital \CA\ and $\alpha : G \to \Aut(A)$ is a point-norm continuous homomorphism.
The following definition is adapted from Pedersen~\cite[7.7.4]{Ped79}.

\begin{definition}\label{de:fourieralgebraofsystem}
Let $(A,G,\alpha)$ be a $C^*$-dynamical system and let $(\theta,\HH_\theta)$ be a faithful representation of $A$. 
Let $\tilde{u} \in (\thcrs)^*$ be a functional of the form
\begin{equation}\label{eq:defofelementofcrossedprodFourieralg}
    \tilde{u}(T) = \sum_{n \in \NN} \ip{T \xi_n}{\eta_n} ,\quad T \in \thcrs,
\end{equation}
where $\xi_n,\eta_n \in L^2(G,\HH_\theta)$ satisfy $\sum_n\nrm[\xi_n]^2 < \infty ,\ \sum_n\nrm[\eta_n]^2 < \infty$. The set of such functionals forms a linear space which can be identified with $((\thcrs)^{''})_*$. 
To each such $\tilde{u}$ we associate the function $u : G\to A^*$ defined by
\begin{equation}\label{eq:crossedFalgactingGtoAstr}
    u(t)(a) := \tilde{u} \big( \pi^\theta(a)\lambda^\theta_t \big), \quad a \in A,\ t \in G.
\end{equation}
The set of all functions from $G$ to $A^*$ associated to functionals of the form of $\tilde{u}$ is a linear space (with the obvious operations), which we again identify with the predual of $(\thcrs)^{''}$ and endow with the norm 
\[
    \nrm[u]_\Aa := \nrm[\tilde{u}] ,
\]
where the right side means the norm of $\tilde{u}$ as a member of the dual space of $(\thcrs)''$. 
The resulting space is called the \emph{Fourier space of $(A,G,\alpha)$} and denoted \cfal\ (when $\theta = \id$ we write $\cfalg{A}{G}{\alpha}$).
\end{definition}

\noindent
In the case of the system $(\CC,G,1)$ the only representation $\theta$ of $\CC$ is trivial, $\pi^\theta$ also becomes trivial, and we can identify $\lambda^\theta$ with $\lambda^G$; thus the above definition gives the predual of $(\rcros{\CC}{G}{1})^{''} \cong \vN$, so the space defined may be identified with \falg.
Definition~\ref{de:fourieralgebraofsystem} also works unchanged for a $W^*$-dynamical system $(M,G,\beta)$; in this case the definition identifies the predual of the \vNA\ $\vNcros{M}{G}{\beta}$ with the space of functions $u : G \to M_*$ corresponding to functionals of the form (\ref{eq:defofelementofcrossedprodFourieralg}) \cite{Tak75}. 
The following is shown by Fujita~\cite[Lemma 3.4]{Fuj79}. \

\begin{remark}\label{re:compactsupportdenseinfourierspace}
{\rm 
Let $(A,G,\alpha)$ be a $C^*$-dynamical system and $(\theta , \HH_\theta)$ a faithful representation of $A$. The compactly supported functions form a dense subset of $\cfal$. The same holds for a $W^*$-dynamical system.
}
\end{remark}
%

It appears that the space $\cfal$ was first defined for $W^*$-dynamical systems and their crossed products by Takai~\cite{Tak75}.
Note that in the case of a $W^*$-dynamical system Fujita~\cite{Fuj79} introduces a Banach algebra structure on \cfal, but we do not pursue this here.

We now define multipliers of the Fourier space of a $C^*$-dynamical system, and study the relationship with Herz--Schur multipliers of the system. The results in this section are essentially predual versions of some results in \cite[Section 3]{MTT18}.

\begin{definition}
A bounded 
function $F : G \to \Bd[A]$ is called a \emph{multiplier of \cfal} if there is a bounded map
\[
    s_F : \cfal\ \to \cfal
\]
such that
\[
    (s_Fu)(t)(a) = u(t) \big( F(t)(a) \big), \quad u \in \cfal ,\ t \in G,\ a \in A .
\]
The norm of a multiplier $F$ is defined by $\nrm[F]_\Mult := \nrm[s_F^*]$.
If moreover $F$ maps into $\CB$ and $s_F^*$ is completely bounded then $F$ is called a \emph{completely bounded multiplier of \cfal}. In this case the completely bounded multiplier norm of $F$ is defined by $\nrm[F]_{\Mcb} := \nrm[s_F^*]_\cb$.
The spaces of bounded and completely bounded multipliers of \cfal\ are denoted $\Mult \cfal$ and $\Mcb \cfal$ respectively.
\end{definition}


\begin{lemma}\label{le:multiplieroffourieralgiffmultiplierofsystem}
Let $F : G \to \Bd[A]$ be a bounded, pointwise-measurable, function, and $(\theta , \HH_\theta)$ be a faithful representation of $A$. 
The following are equivalent:
\begin{enumerate}[i.]
    \item $F$ is a multiplier of \cfal;
    \item there is an ultraweakly continuous bounded operator $S_F$ on $(\thcrs)^{''}$ such that $S_F(\pi^\theta(a)\lambda^\theta_t) = \pi^\theta(F(t)(a))\lambda^\theta_t$ for all $a\in A,\ t \in G$.
\end{enumerate}
Moreover, if either condition holds then $\nrm[F]_\Mult = \nrm[S_F]$. Finally, $F$ is a completely bounded multiplier of $\cfal$ if and only if the map $S_F$ of (ii) is completely bounded, and in this case $\nrm[F]_\Mcb = \nrm[S_F]_\cb$.
\end{lemma}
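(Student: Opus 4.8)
The plan is to read the lemma as a straightforward instance of the predual duality for von~Neumann algebras. By Definition~\ref{de:fourieralgebraofsystem} the assignment $u \leftrightarrow \tilde u$ identifies $\cfal$ isometrically with the predual of the von~Neumann algebra $N := (\thcrs)''$, so that $(\cfal)^{*} = N$, and under this identification the pairing is
\[
    \dualp{\pi^\theta(a)\lambda^\theta_t}{u} = \tilde u\big(\pi^\theta(a)\lambda^\theta_t\big) = u(t)(a), \quad a \in A,\ t \in G,
\]
which is exactly (\ref{eq:crossedFalgactingGtoAstr}). I will also use the standard fact that a bounded operator on $N$ is weak*-continuous precisely when it is the Banach-space adjoint of a bounded operator on $N_{*} = \cfal$, and that passing between an operator and its (pre-)adjoint preserves the norm.

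For $(\mathrm{i}) \Rightarrow (\mathrm{ii})$ I would set $S_F := s_F^{*}$, the adjoint of the given bounded map $s_F : \cfal \to \cfal$ acting on $(\cfal)^{*} = N$; being the adjoint of an operator defined on a predual, $S_F$ is automatically weak*-continuous, and $\nrm[S_F] = \nrm[s_F^{*}] = \nrm[F]_\Mult$ by the definition of the multiplier norm. To pin down its action on generators, I compute, for every $u \in \cfal$,
\[
    \dualp{S_F(\pi^\theta(a)\lambda^\theta_t)}{u} = \dualp{\pi^\theta(a)\lambda^\theta_t}{s_F u} = (s_F u)(t)(a) = u(t)\big(F(t)(a)\big) = \dualp{\pi^\theta\big(F(t)(a)\big)\lambda^\theta_t}{u},
\]
and conclude $S_F(\pi^\theta(a)\lambda^\theta_t) = \pi^\theta(F(t)(a))\lambda^\theta_t$ since the functionals $\tilde u$, $u \in \cfal$, separate the points of $N$. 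For $(\mathrm{ii}) \Rightarrow (\mathrm{i})$ I would run this in reverse: a bounded weak*-continuous $S_F$ on $N$ has a pre-adjoint $s_F := (S_F)_{*}$ on $N_{*} = \cfal$, the identity $(s_F u)(t)(a) = \tilde u(S_F(\pi^\theta(a)\lambda^\theta_t)) = u(t)(F(t)(a))$ shows that $F$ is a multiplier of $\cfal$ with multiplication map $s_F$, and $\nrm[F]_\Mult = \nrm[s_F^{*}] = \nrm[S_F]$ because $s_F^{*} = S_F$.

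The completely bounded clause is then purely formal: since $s_F$ and $S_F$ are mutually (pre-)adjoint and $M_n(N) = (M_n(\cfal))^{*}$ for every $n$, the amplifications $(s_F)_n$ and $(S_F)_n$ are again mutually (pre-)adjoint, so $\nrm[(s_F^{*})_n] = \nrm[(S_F)_n]$; hence $S_F$ is completely bounded iff $s_F^{*}$ is --- i.e.\ iff $F \in \Mcb\cfal$ --- and then $\nrm[F]_\Mcb = \nrm[s_F^{*}]_{\cb} = \nrm[S_F]_{\cb}$.

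I do not expect a genuine obstacle, the substance being the von~Neumann-algebraic duality recalled above, but two bookkeeping points deserve attention. First, one must check that the spaces really match up: that the values of $s_F^{*}$ lie in, and that $(S_F)_{*}$ arises from, the concrete space $\cfal$ of $A^{*}$-valued functions rather than some abstract predual --- this is where the identification of Definition~\ref{de:fourieralgebraofsystem} and the injectivity of $u \mapsto \tilde u$ are used. Second, one must verify that the requirement that $s_F$ send $u$ to the function $t \mapsto u(t) \circ F(t)$ is literally the same requirement as that $S_F$ send $\pi^\theta(a)\lambda^\theta_t$ to $\pi^\theta(F(t)(a))\lambda^\theta_t$; this is precisely the displayed pairing computation, and once it is in hand all the norm identities, and the completely bounded statement, follow automatically.
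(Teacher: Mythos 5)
Your proposal is correct and follows essentially the same route as the paper: both set $S_F := s_F^*$ and verify its action on generators by the displayed pairing computation, and both obtain the converse by pulling a weak*-continuous $S_F$ back to a map on the predual $\cfal$; the norm identities are then immediate since $\nrm[F]_\Mult$ and $\nrm[F]_\Mcb$ are defined via $s_F^*$. The only difference is cosmetic — your appeal to $M_n$-amplifications for the completely bounded clause is unnecessary given that definition.
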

\begin{proof}
If $F$ is a multiplier of \cfal\ then $S_F := s_F^*$ is the required map because for any $u\in \cfal$
\[
    \dualp{S_F(\pi^\theta(a)\lambda^\theta_t)}{u} = \dualp{\pi^\theta(a)\lambda^\theta_t}{s_Fu} = u(t) \big(F(t)(a) \big) = \dualp{\pi^\theta \big( F(t)(a) \big)\lambda^\theta_t}{u}.
\]

Conversely, given $u \in \cfal$, the function
\[
    \pi^\theta(a)\lambda^\theta_t \mapsto \dualp{S_F \big(\pi^\theta(a)\lambda^\theta_t) \big)}{u}
\]
extends to an ultraweakly continuous linear functional on $(\thcrs)^{''}$. Therefore,
there is $Fu \in \cfal$ with $\nrm[Fu] \leq \nrm[u]_\Aa\nrm[S_F]$, such that $\dualp{\pi^\theta(a)\lambda^\theta_t}{Fu} = \dualp{S_F(\pi^\theta(a)\lambda^\theta_t)}{u}$.
It follows that the map $u \mapsto Fu$ is continuous, and
\[
    (Fu)(t)(a) = \dualp{\pi^\theta(a)\lambda^\theta_t}{Fu} = \dualp{S_F \big(\pi^\theta(a)\lambda^\theta_t \big)}{u} = u(t) \big(F(t)(a) \big) ,
\]
for all $t \in G,\ a \in A$, so $F$ is a multiplier of \cfal\ with $s_Fu = Fu$ for all $u \in \cfal$.
Finally, $\nrm[F]_M = \nrm[s_F^*] = \nrm[S_F]$ by definition.
The statements about completely bounded multipliers follow similarly.
\end{proof}

\noindent
Since the ultraweak topology on $(\thcrs)^{''}$ is the relative ultraweak topology from $\Bd[L^2(G)\otimes \HH_\theta]$ 
we consider the map $S_F$ of the previous lemma to be a weak*-continuous map on \wstthcrs.

\begin{corollary}\label{co:HSmultsarecbmultsofFourieralg}
The space of Herz--Schur $\theta$-multipliers of $(A,G,\alpha)$ coincides isometrically with the space of completely bounded multipliers of \cfal.
\end{corollary}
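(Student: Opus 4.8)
The plan is to chain together the results already established in this section with the $\SU(2)$-characterisations of the two classes of multipliers. Recall that Corollary~\ref{co:HSmultsarecbmultsofFourieralg} asserts two things: first, that a bounded pointwise-measurable $F : G \to \CB$ is a Herz--Schur $\theta$-multiplier of $(A,G,\alpha)$ if and only if it is a completely bounded multiplier of $\cfal$; and second, that under these equivalent conditions the norms $\nrm[F]_\HS$ (in the sense of Definition~\ref{de:thetamultiplier}, i.e.\ the cb-norm of the weak*-continuous extension of $S_F^\theta$ to $\wstthcrs$) and $\nrm[F]_\Mcb$ agree.

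First I would observe that both Proposition~\ref{pr:HSmultipliersintermsofSU2} and Proposition~\ref{pr:multipliersofFourieralgintermsosSU2} reduce their respective notions to the \emph{same} external condition: that $F^{\SU(2)}$ be an (ordinary, not completely bounded) $\theta$-multiplier, respectively multiplier of the Fourier space, of the system $(A,\SU(2)\times G,\alpha^{\SU(2)})$. So the key step is to show that for a \emph{fixed} dynamical system $(B,H,\beta)$ (which we will apply with $B = A$, $H = \SU(2)\times G$, $\beta = \alpha^{\SU(2)}$), a bounded function $G_0 : H \to \Bd[B]$ is a $\theta$-multiplier in the sense of Definition~\ref{de:thetamultiplier} if and only if it is a multiplier of the Fourier space $\mathcal{A}^\theta(B,H,\beta)$. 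But this is exactly the content of Lemma~\ref{le:multiplieroffourieralgiffmultiplierofsystem}: condition (i) there is being a multiplier of the Fourier space, and condition (ii) is the existence of an ultraweakly continuous bounded $S_{G_0}$ on $(\repcros{B}{H}{\beta}{\theta})''$ intertwining the generators correctly --- which, since the ultraweak topology on the enveloping von~Neumann algebra is the relative one from $\Bd[L^2(H)\otimes\HH_\theta]$, is precisely the statement that $S_{G_0}^\theta$ extends to a bounded weak*-continuous map on $\repcros{B}{H}{\beta}{\theta}^{\,\mathrm{w}^*}$, i.e.\ that $G_0$ is a $\theta$-multiplier. Lemma~\ref{le:multiplieroffourieralgiffmultiplierofsystem} also gives the isometry $\nrm[G_0]_\Mult = \nrm[S_{G_0}]$.

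Assembling these pieces: if $F$ is a Herz--Schur $\theta$-multiplier of $(A,G,\alpha)$, then by Proposition~\ref{pr:HSmultipliersintermsofSU2}(iii) $F^{\SU(2)}$ is a $\theta$-multiplier of $(A,\SU(2)\times G,\alpha^{\SU(2)})$; by the previous paragraph it is then a multiplier of $\mathcal{A}^\theta(A,\SU(2)\times G,\alpha^{\SU(2)})$; hence by Proposition~\ref{pr:multipliersofFourieralgintermsosSU2}(iii)$\Rightarrow$(i), $F$ is a completely bounded multiplier of $\cfal$ (note that $F$ is a multiplier of $\cfal$ to begin with, by the case $\Gamma$ trivial of the same correspondence, so the hypothesis of Proposition~\ref{pr:multipliersofFourieralgintermsosSU2} is met). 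The reverse implication runs the same loop backwards. For the norms, combine the ``moreover'' clause of Proposition~\ref{pr:multipliersofFourieralgintermsosSU2}, namely $\nrm[F]_\Mcb = \nrm[F^{\SU(2)}]_\Mult$, with the isometry $\nrm[F^{\SU(2)}]_\Mult = \nrm[S_{F^{\SU(2)}}]$ from Lemma~\ref{le:multiplieroffourieralgiffmultiplierofsystem}, and with the corresponding norm identity extractable from the proof of Proposition~\ref{pr:HSmultipliersintermsofSU2} (the cb-extension on $\vN[\SU(2)]\btens\wstthcrs$ restricts to the generators in the same way, giving $\nrm[F]_\HS = \nrm[S_{F^{\SU(2)}}]$ by the same direct-summand argument used in Proposition~\ref{pr:multipliersofFourieralgintermsosSU2}(iii)$\Rightarrow$(i)).

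The only real subtlety --- the step I expect to be the main obstacle --- is the bookkeeping around which ambient von~Neumann algebra each map lives on, and checking that ``Herz--Schur $\theta$-multiplier'' as defined in Definition~\ref{de:thetamultiplier} (cb, weak*-continuous extension of $S_F^\theta$ to $\wstthcrs$) is literally the same as ``multiplier of $\cfal$ that is moreover completely bounded'' in the sense of the multiplier definition preceding Lemma~\ref{le:multiplieroffourieralgiffmultiplierofsystem}. This identification is already done in Lemma~\ref{le:multiplieroffourieralgiffmultiplierofsystem} and the remark following it (identifying the ultraweak topology on $(\thcrs)''$ with the relative weak* topology from $\Bd[L^2(G)\otimes\HH_\theta]$, so that $S_F$ there \emph{is} a weak*-continuous map on $\wstthcrs$), so in the end the corollary is essentially a formal concatenation; the write-up should simply cite Lemma~\ref{le:multiplieroffourieralgiffmultiplierofsystem}, Proposition~\ref{pr:HSmultipliersintermsofSU2}, and Proposition~\ref{pr:multipliersofFourieralgintermsosSU2} in sequence and spell out the norm equalities.
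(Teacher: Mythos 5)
Your argument is correct and is essentially the paper's own proof: both hinge on Lemma~\ref{le:multiplieroffourieralgiffmultiplierofsystem} to identify ``$\theta$-multiplier'' with ``multiplier of the Fourier space'' for the product system, so that the $\Gamma$/$\SU(2)$ conditions of Proposition~\ref{pr:HSmultipliersintermsofSU2} and Proposition~\ref{pr:multipliersofFourieralgintermsosSU2} coincide, followed by the same chain of norm equalities $\nrm[F]_\Mcb = \nrm[F^{\SU(2)}]_\Mult = \nrm[S_{F^{\SU(2)}}] = \nrm[S_F]_\cb = \nrm[F]_\HS$. The only cosmetic difference is that the paper matches up condition (ii) (arbitrary $\Gamma$) of the two propositions where you use condition (iii), and your explicit check that $F$ is a multiplier of $\cfal$ (the standing hypothesis of Proposition~\ref{pr:multipliersofFourieralgintermsosSU2}) is a welcome extra care point.
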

\begin{proof}
Immediate from Lemma~\ref{le:multiplieroffourieralgiffmultiplierofsystem} and \cite[Corollary 3.10]{MTT18}.
\end{proof}

In the next section we will use the description of Herz--Schur multipliers of a dynamical system as completely bounded multipliers of the Fourier space in studying weak amenability of the system.

\begin{remark}\label{re:connectionwithBedosConti}
{\rm B\'{e}dos and Conti~\cite[Section 4]{BC15} have taken a Hilbert $C^*$-module approach to completely bounded multipliers of a discrete (twisted) $C^*$-dynamical system.
It is easy to check that $F : G \to \CB$ is a Herz--Schur $(A,G,\alpha)$-multiplier if and only if $T_F : G \times A \to A;\ T_F(t,a) := F(t)(a)$ ($t \in G,\ a \in A$) is a completely bounded reduced multiplier of $(A,G,\alpha)$, in the sense of B\'{e}dos--Conti.
The same authors have also introduced a version of the Fourier--Stieltjes algebra for discrete (twisted) $C^*$-dynamical systems, again using Hilbert $C^*$-modules~\cite{BC16}.
}
\end{remark}

\section{Weak amenability}
\label{sec:weakamenability}

\noindent
In this section we define weak amenability of a $C^*$-dynamical system; when the group is discrete we show this is equivalent to the CBAP of the reduced crossed product.
We also define weak amenability of a $W^*$-dynamical system, and when the group is discrete show this is equivalent to the weak* CBAP of the associated crossed product.
The weak* CBAP for crossed products of $W^*$-dynamical systems has been studied by Haagerup--Kraus~\cite[Section 3]{HKr94}; they showed that if $(M,G,\alpha)$ is a $W^*$-dynamical system with $G$ weakly amenable and $M$ having the weak* CBAP then it is not true in general that $\vNcros{M}{G}{\alpha}$ has the weak* CBAP. We will give an example of an assumption under which this implication does hold.
The CBAP for the reduced crossed product of a $C^*$-dynamical system has been studied by Sinclair--Smith~\cite{SS97} under the assumption that the group is amenable; here we give some other conditions under which the reduced crossed product has the CBAP.

As before $A$ is a unital \CA\ and $(\theta,\HH_\theta)$ is a faithful representation of $A$.
In this section $G$ will always denote a discrete group. 
Denote by $\alpha : G \to \Aut(A)$ a homomorphism, so that $(A,G,\alpha)$ is a $C^*$-dynamical system.
Since $G$ is discrete there is a canonical conditional expectation $\EE^\theta : \rcros{\theta(A)}{G}{\alpha^\theta} \to \theta(A)$ which is equivariant (see Brown--Ozawa~\cite[Proposition 4.1.9]{BrO08}). 
We denote by $\EE$ the completely positive map defined by
\[
    \thcrs \cong \rcros{\theta(A)}{G}{\alpha^\theta} \to A;\ \sum_{t \in G} \pi^\theta(a_t)\lambda^\theta_t \mapsto a_e , \quad a_t \in A .
\]
The triple $(M,G,\beta)$ will denote a discrete $W^*$-dynamical system, {\it i.e.}\ $M$ is a von Neumann algebra acting on a Hilbert space $\HH_M$, $G$ is a discrete group, and $\beta : G \to \Aut(M)$ a homomorphism.
The symbol $\EE$ will also be used for the conditional expectation $\vNcros{M}{G}{\beta} \to M$, defined similarly.

Our main questions are:
\begin{itemize}
    \item For a $C^*$-dynamical system $(A,G,\alpha)$ what are necessary and/or sufficient conditions for $\thcrs$ to have the completely bounded approximation property?
    \item For a $W^*$-dynamical system $(M,G,\beta)$ what are necessary and/or sufficient conditions for $\vNcros{M}{G}{\beta}$ to have the weak* completely bounded approximation property?
\end{itemize}
Our approach to these problems is to consider certain Herz--Schur multipliers of the system in question. 
Since we have so far only considered Herz--Schur multipliers of a $C^*$-dynamical system we briefly describe a construction, mentioned by Fujita~\cite[page 56]{Fuj79}, which shows that Herz--Schur multipliers of a $W^*$-dynamical system are particular cases of the weak*-extendable multipliers of Definition~\ref{de:thetamultiplier}.
For the $W^*$-dynamical system $(M,G,\beta)$, where $M$ is a \vNA\ on the separable Hilbert space $\HH_M$, consider the set
\[
    M_\beta := \{ x \in M : t \mapsto \beta_t(x)\ \text{is norm-continuous for all $t \in G$} \} .
\]
Then $M_\beta$ is a $G$-invariant, weak*-dense $C^*$-subalgebra of $M$ containing the identity, and $(M_\beta , G,\beta)$ is a $C^*$-dynamical system, with $M_\beta$ faithfully represented on $\Bd[\HH_M]$. The construction of the reduced crossed product $\rcros{M_\beta}{G}{\beta}$, using the faithful representation $\id: M_\beta \to \Bd[\HH_M]$, gives a weak*-dense $C^*$-subalgebra of $\vNcros{M}{G}{\beta}$.
It follows that $\cfalg[\id]{M_\beta}{G}{\beta}$ can be identified with the predual of $\vNcros{M}{G}{\beta}$, and that the Herz--Schur $\id$-multipliers of $(M_\beta ,G,\beta)$ are completely bounded multipliers of $\cfalg[\id]{M_\beta}{G}{\beta}$ and the associated maps possess completely bounded, weak*-continuous extensions to $\vNcros{M}{G}{\beta}$.

For a \CA\ $B$ let $\CBw[B]$ be the space of completely bounded maps on $B$ that extend to completely bounded, weak*-continuous, maps on $B''$.

\begin{definition}\label{de:weakamenabilityofsystem}
A $C^*$-dynamical system $(A,G,\alpha)$ will be called \emph{weakly amenable} if there exists a net $(F_i)$ of finitely supported Herz--Schur $(A,G,\alpha)$-multipliers such that $F_i(t)$ is a finite rank completely bounded map on $A$ for all $t \in G$,
\[ \label{eq:defofweakamenabilityconvergence}
    F_i(t)(a) \stackrel{\nrm}{\to} a \quad  \text{for all $t\in G,\ a\in A$,}
\]
and $\sup_i \nrm[F_i]_{\HS} = K < \infty$. The infimum of all such $K$ is denoted by $\Lambda_\cb(A,G,\alpha)$.

A $W^*$-dynamical system $(M,G,\beta)$, with $M$ acting on $\Bd[\HH_M]$, will be called weakly amenable if there is a net $F_i : G \to \CBw[M_\beta]$ of finitely supported Herz--Schur $\id$-multipliers of $(M_\beta , G, \beta)$, such that $F_i(t)$ extends to a finite rank completely bounded map on $M$ for all $t \in G$,
\begin{equation} \label{eq:defofweakamenabilityconvergenceWsystem}
    F_i(t)(a) \stackrel{w^*}{\to} a \quad  \text{for all $t\in G,\ a\in M$,}
\end{equation}
and $\sup_i \nrm[F_i]_{\HS} = K < \infty$.
\end{definition}

\noindent
Observe that if $A = \CC$ then the finite rank condition is always satisfied, so Definition~\ref{de:weakamenabilityofsystem} reduces to weak amenability of $G$.

\begin{remark}\label{re:weakamenabilityofsystemimpliesofgroupandvNsystems}
If $(A,G,\alpha)$ is a weakly amenable $C^*$-dynamical system with $A$ unital, such that $A$ is faithfully represented on a separable Hilbert space $\HH$, and the maps $F_i$ of Definition~\ref{de:weakamenabilityofsystem} satisfy
\begin{equation}\label{eq:systemimpliesgroupcondition}
    F_i(t) \circ \alpha_r = \alpha_r \circ F_i(t) , \quad r,t \in G ,
\end{equation}
then $G$ is weakly amenable.
\end{remark}
\begin{proof}
Suppose $(A,G,\alpha)$ is weakly amenable and take a net $(F_i)$ of Herz--Schur $(A,G,\alpha)$-multipliers satisfying the definition. Let $\xi \in \HH$ be a unit vector. Condition (\ref{eq:systemimpliesgroupcondition}) ensures that the map
\[
    v_i : G \to \CC;\ v_i(ts\inv) := \ip{\mathcal{N}(F_i)(s,t)(\unA)\xi}{\xi} , \quad s,t \in G
\]
is well-defined.
Let $V_i$ and $W_i$ be the maps associated to $\mathcal{N}(F_i)$ in Theorem~\ref{th:transference}. Then
\[
    v_i(ts\inv) = \ip{\mathcal{N}(F_i)(s,t)(\unA)\xi}{\xi} = \ip{V_i(s)\xi}{W_i(t)\xi} , \quad s,t \in G,
\]
Hence $v_i : G \to \CC$ is a Herz--Schur multiplier (see Bo\.zejko--Fendler~\cite{BF84}, these statements are part of the proof of \cite[Proposition 4.1]{MTT18} for a particular case where (\ref{eq:systemimpliesgroupcondition}) holds).
Since $F_i$ has finite support so does $v_i$.
We have
\[
    \nrm[v_i]_\Mcb \leq \esssup_{s \in G}\nrm[V_i(s)]\esssup_{t \in G}\nrm[W_i(t)] = \nrm[\mathcal{N}(F_i)]_\mathfrak{S} = \nrm[F_i]_\HS .
\]
Since
\[
    v_i(ts\inv) = \ip{\mathcal{N}(F_i)(s,t)(\unA)\xi}{\xi} = \ip{F_i(ts\inv)(\unA) \xi}{\xi} \to \ip{\unA \xi}{\xi} = 1 ,
\]
$G$ is weakly amenable.
\end{proof}

We now prove our characterisation of weak amenability for $C^*$-dynamical systems.
Since the reduced crossed product $C^*$-algebra and the collection of Herz--Schur $(A,G,\alpha)$-multipliers do not depend on the representation $\theta$ of $A$ we will omit $\theta$ from our notation, working with a fixed representation of $A$ on a separable Hilbert space $\HH$.

\begin{theorem}\label{th:systemweaklyamenableiffnormclosedcrossedprodhasCBAP}
Let $(A,G,\alpha)$ be a $C^*$-dynamical system, with $G$ a discrete group and $A$ a unital \CA. 
The following are equivalent:
\begin{enumerate}[i.]
    \item $(A,G,\alpha)$ is weakly amenable;
    \item $\rcrs$ has the completely bounded approximation property.
\end{enumerate}
Moreover, if the conditions hold then $\Lambda_\cb(A,G,\alpha) = \Lambda_\cb(\rcrs)$.
\end{theorem}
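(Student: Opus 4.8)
The plan is to prove the two implications separately, tracking the constants carefully so that the equality of Cowling--Haagerup-type constants falls out of the estimates.

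\textbf{Direction (i)$\Rightarrow$(ii).} Suppose $(A,G,\alpha)$ is weakly amenable, witnessed by a net $(F_i)$ of finitely supported Herz--Schur $(A,G,\alpha)$-multipliers with $F_i(t)$ finite rank for each $t$, $F_i(t)(a)\to a$ in norm, and $\sup_i\nrm[F_i]_{\HS}=K$. For each $i$ let $S_{F_i}^\theta$ be the associated completely bounded map on $\thcrs$ from Definition~\ref{de:HSmultiplier}, so $\nrm[S_{F_i}^\theta]_\cb=\nrm[F_i]_{\HS}$. The first task is to check that $S_{F_i}^\theta$ is finite rank on $\thcrs$: since $F_i$ is supported on a finite set $E_i\subseteq G$, and $S_{F_i}^\theta(\pi^\theta(a)\lambda^\theta_t)=\pi^\theta(F_i(t)(a))\lambda^\theta_t$, the range of $S_{F_i}^\theta$ lies in $\lspan\{\pi^\theta(b)\lambda^\theta_t : t\in E_i,\ b\in\ran F_i(t)\}$, which is finite-dimensional because each $F_i(t)$ has finite rank. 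The second task is to verify point-norm convergence $S_{F_i}^\theta\to\id$. Using the conditional expectation $\EE$ (and its $\theta$-version) together with the fact that the maps $x\mapsto\EE(\lambda^\theta_{t}{}^*x)$ recover the ``Fourier coefficients'' of $x\in\thcrs$, one shows that on the dense subalgebra $(\pi^\theta\rtimes\lambda^\theta)(C_c(G,A))$ we have $S_{F_i}^\theta\to\id$ pointwise in norm; the uniform bound $\sup_i\nrm[S_{F_i}^\theta]_\cb=K$ then extends this convergence to all of $\thcrs$. Hence $\thcrs$ has the CBAP with $\Lambda_\cb(\thcrs)\le K$, and taking the infimum over admissible nets gives $\Lambda_\cb(\thcrs)\le\Lambda_\cb(A,G,\alpha)$.

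\textbf{Direction (ii)$\Rightarrow$(i).} Suppose $\thcrs$ has the CBAP, witnessed by a net $(T_\gamma)$ of finite rank completely bounded maps with $T_\gamma\to\id$ point-norm and $\sup_\gamma\nrm[T_\gamma]_\cb=C$. The idea is to average each $T_\gamma$ against the dual action (or, since $G$ is discrete, against the gauge action of the compact dual $\widehat{G}$ when $G$ is abelian --- in general one uses the conditional expectations onto the spectral subspaces) to replace $T_\gamma$ by a map that is ``diagonal'' with respect to the $G$-grading, i.e. of the form $S^\theta_{F_\gamma}$ for some $F_\gamma:G\to\CB$. Concretely, for $t\in G$ define $F_\gamma(t):A\to A$ by $F_\gamma(t)(a):=\EE\big(\lambda^\theta_t{}^*\,T_\gamma(\pi^\theta(a)\lambda^\theta_t)\big)$ (interpreting $\EE$ as landing in $A$ via $\theta^{-1}$ on its range); one checks $F_\gamma(t)$ is completely bounded, finite rank (because $T_\gamma$ is and because composing with the completely contractive maps $\pi^\theta(\cdot)\lambda^\theta_t$ and $\EE(\lambda^\theta_t{}^*\,\cdot\,)$ cannot increase the rank), and that $F_\gamma(t)(a)\to a$ in norm from $T_\gamma\to\id$. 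The ``diagonalised'' map $S^\theta_{F_\gamma}$ agrees with the composition of $T_\gamma$ with the spectral projections, so $\nrm[S^\theta_{F_\gamma}]_\cb\le\nrm[T_\gamma]_\cb\le C$; in particular $S^\theta_{F_\gamma}$ is completely bounded, so $F_\gamma$ is a Herz--Schur $(A,G,\alpha)$-multiplier with $\nrm[F_\gamma]_{\HS}\le C$. Finite support of $F_\gamma$ is not automatic and must be arranged by a cutting-off argument: multiply $F_\gamma$ pointwise by a classical finitely supported Herz--Schur multiplier (e.g. an indicator of a finite set, which on a discrete group has Herz--Schur norm $1$ when the set is a point, and bounded norm for suitable finite sets) --- here one invokes Lemma~\ref{le:equivalencesforclassicalHSmultiplier} to see that the product stays a Herz--Schur multiplier of the system with controlled norm. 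Arranging this so the norm blow-up is negligible in the limit gives a net realising weak amenability with $\Lambda_\cb(A,G,\alpha)\le C$, hence $\Lambda_\cb(A,G,\alpha)\le\Lambda_\cb(\thcrs)$.

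\textbf{Main obstacle.} Combining the two inequalities yields $\Lambda_\cb(A,G,\alpha)=\Lambda_\cb(\thcrs)$. The delicate point is the cutting-off in (ii)$\Rightarrow$(i): one must pass from a finite rank (but not finitely supported) Herz--Schur multiplier to a finitely supported one \emph{without} inflating the Herz--Schur norm in the limit, and simultaneously preserve both the finite rank of each $F_\gamma(t)$ and the point-norm convergence to the identity. For a discrete group this is standard --- the averaging/cutting argument is exactly the one behind Remark~\ref{re:equivdefinitionsofweakamenabilityofgroup} and Brown--Ozawa~\cite[Theorem 12.3.10]{BrO08} --- but care is needed because the classical finitely supported multipliers one multiplies by have Herz--Schur norm that grows with the size of the support unless $G$ has extra structure; one circumvents this by first using $T_\gamma\to\id$ to make the ``tail'' of $F_\gamma$ small in the relevant sense before truncating, so that the truncation is a genuinely small perturbation. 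The compatibility of the conditional expectation $\EE$ with the isomorphism $\thcrs\cong\rcros{\theta(A)}{G}{\alpha^\theta}$ and with the spectral subspace decomposition is what makes the averaging step legitimate.
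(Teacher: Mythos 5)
Your direction (i)$\Rightarrow$(ii) matches the paper's argument. The problem is in (ii)$\Rightarrow$(i), where your plan has a genuine gap at exactly the point you flag as delicate: the passage to finitely supported multipliers. Your proposal is to take $F_\gamma(t)(a)=\EE\big(\lambda^\theta_t{}^*T_\gamma(\pi^\theta(a)\lambda^\theta_t)\big)$ for the \emph{given} approximating maps $T_\gamma$ and then truncate $F_\gamma$ by multiplying pointwise with finitely supported classical Herz--Schur multipliers. For a general discrete group this cannot be made to work: indicator functions of finite sets are not uniformly bounded in $\Mcb\falg$, and the existence of a net of finitely supported functions, uniformly bounded in $\Mcb\falg$ and tending to $1$ pointwise, is precisely weak amenability of $G$ --- which is not a hypothesis here. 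Your suggested repair (``use $T_\gamma\to\id$ to make the tail of $F_\gamma$ small before truncating'') does not close this: point-norm convergence of $T_\gamma$ to the identity gives no control whatsoever on the Herz--Schur norm of the restriction of $F_\gamma$ to the complement of a finite set. This is not a pedantic objection; it is exactly the obstruction that forces the paper to \emph{assume} weak amenability of $G$ in the $W^*$-version (Theorem~\ref{th:characterisationweakamenabilityandCBAPcrossedproducts} and Remark~\ref{re:weakstarHaagerupconstants}(i)), where the multiply-by-$\varphi_j$ trick you describe is the only option.

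The paper's route avoids truncation altogether by pre-processing $T_\gamma$ rather than post-processing $F_\gamma$: writing the finite rank map as $\rho_\gamma=\sum_{j=1}^k\phi_j\otimes T_j$, one norm-approximates each $T_j$ by a finite sum of monomials $\pi^\theta(a)\lambda^\theta_s$, obtaining $\rho_{\gamma,n}$ with $\Vert\rho_\gamma-\rho_{\gamma,n}\Vert_\cb\leq\sum_j\Vert\phi_j\Vert\,\Vert T_j-T_{j,n}\Vert<1/n$ and with range spanned by monomials over a \emph{finite} set of group elements $S_n$. Since $\EE\big(\pi^\theta(b)\lambda^\theta_{s}\lambda^\theta_{t\inv}\big)=0$ unless $s=t$, the resulting $F_{\gamma,n}$ is then automatically supported in $S_n$, with no norm inflation beyond an $O(1/n)$ rescaling. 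Two further points you should address: (a) your key estimate $\Vert S_{F_\gamma}\Vert_\cb\leq\Vert T_\gamma\Vert_\cb$ is asserted via ``composition with spectral projections'' but never proved; the paper establishes it by exhibiting the identity $S_{F_{\gamma,n}}(x)=V^*(\id\otimes\rho_{\gamma,n})\tau(x)V$ for the coaction $\tau:\pi^\theta(a)\lambda^\theta_t\mapsto\lambda^G_t\otimes\pi^\theta(a)\lambda^\theta_t$ and an explicit isometry $V$, verified by a matrix-coefficient computation --- this is the technical heart of the direction and cannot be waved through. (b) Your formula places $\lambda^\theta_t{}^*$ on the left; since $\lambda^\theta_{t\inv}\pi^\theta(a)=\pi^\theta(\alpha_{t\inv}(a))\lambda^\theta_{t\inv}$, it yields $F_\gamma(t)(a)\to\alpha_{t\inv}(a)$ rather than $a$; you need $\EE\big(T_\gamma(\pi^\theta(a)\lambda^\theta_t)\lambda^\theta_{t\inv}\big)$ as in the paper.
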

\begin{proof}
(i)$\implies$(ii) Suppose that $(F_i)$ is a net of Herz--Schur $(A,G,\alpha)$-multipliers satisfying weak amenability of the system. It follows immediately that the net $(S_{F_i})$ of corresponding maps on \rcrs\ consists of completely bounded, finite rank, maps satisfying $\sup \nrm[S_{F_i}]_\text{cb} \leq C < \infty$. It remains to show that $\nrm[S_{F_i}(T) - T] \to 0$ for all $T \in \rcrs$. 
For this, it suffices to show that $\nrm[S_{F_i}(\sum_t \pi(a_t)\lambda_t) - \sum_t \pi(a_t)\lambda_t] \to 0$ when the sums are finite. Indeed, for any $T \in \rcrs$ and $\epsilon >0$, we can find $a_t \in A$ with $\nrm[T - \sum_t \pi(a_t)\lambda_t] < \epsilon$, where only a finite number of $a_t$ are non-zero, so
\[
\begin{split}
    \nrm[S_{F_i}(T) - T] &\leq \bignrm[S_{F_i}(T) - S_{F_i} \Big( \sum_t \pi(a_t)\lambda_t \Big)] \\
        &\quad + \bignrm[S_{F_i} \Big( \sum_t \pi(a_t)\lambda_t \Big) - \sum_t \pi(a_t)\lambda_t]
         + \bignrm[\sum_t \pi(a_t)\lambda_t - T] \\
        &< C \epsilon + \bignrm[S_{F_i} \Big( \sum_t \pi(a_t)\lambda_t \Big) - \sum_t \pi(a_t)\lambda_t] + \epsilon .
\end{split}
\]
Now
\[
\begin{split}
    \bignrm[S_{F_i} \Big( \sum_t \pi(a_t)\lambda_t \Big) - \sum_t \pi(a_t)\lambda_t] &= \bignrm[\sum_t \pi \big( F_i(t)(a_t) \big)\lambda_t - \sum_t \pi(a_t)\lambda_t] \\
        &\leq \sum_t \nrm[\pi \big( F_i(t)(a_t) - a_t \big)\lambda_t] \to 0
\end{split}
\]
as $F_i(t)(a) \to a$ for all $a \in A,\ t \in G$.
It follows that $\Lambda_\cb(\rcrs) \leq \Lambda_\cb(A,G,\alpha)$.

(ii)$\implies$(i) We will use a similar idea to Haagerup~\cite[Lemma 2.5]{Haaun16}. 
First consider a finite rank, completely bounded, map $\rho : \rcrs \to \rcrs$. Take $T_1 , \ldots , T_k \in \rcrs$ which span $\ran \rho$, so there are $\phi_1, \ldots ,\phi_k \in (\rcrs)^*$ such that
\[
    \rho = \sum_{j=1}^k \phi_j \otimes T_j ,
\]
where $(\phi_j \otimes T_j) (T) = \phi_j(T) T_j$ ($T \in \rcrs$).
We note that, for a matrix $(x_{p,q}) \in M_n(\rcrs)$,
\[ \begin{split}
    \bignrm[\Big( \sum_{j=1}^k \phi_j \otimes T_j \Big)^{(n)} (x_{p,q})] \leq \sum_{j=1}^k \nrm[(\phi_j \otimes T_j )^{(n)} (x_{p,q})] 
        &= \sum_{j=1}^k \nrm[\phi_j^{(n)}(x_{p,q})\ \diag_n(T_j)] \\
        &\leq \sum_{j=1}^k \nrm[\phi_j] \nrm[(x_{p,q})] \nrm[T_j] ,
\end{split} \]
where $\diag_n(T)$ denotes the diagonal $n \times n$ matrix with each diagonal entry equal to $T$. Thus
\begin{equation}\label{eq:cbnormfiniterank}
    \bignrm[\sum_{j=1}^k \phi_j \otimes T_j]_\cb \leq \sum_{j=1}^k\nrm[\phi_j] \nrm[T_k] .
\end{equation}
For each $j$ and each $n \in \NN$ find $a_{j,n}^i \in A$ and $s_{j,n}^i \in G$ such that $T_{j,n} := \sum_{i=1}^{k_{j,n}} \pi(a_{j,n}^i)\lambda_{s_{j,n}^i}$ satisfies $\nrm[T_j - T_{j,n}] < 1/(nk\max_j\nrm[\phi_j])$.
Define $\rho_n := \sum_{j=1}^k \phi_j \otimes T_{j,n}$.
Then
\begin{equation}\label{eq:differencecbnormafteradjusting} \begin{split}
    \nrm[\rho - \rho_n]_\cb = \bignrm[\Big( \sum_{j=1}^k \phi_j \otimes T_j \Big) - \Big( \sum_{j=1}^k \phi_j \otimes T_{j,n} \Big)]_\cb 
        &\leq \sum_{j=1}^k \nrm[\phi_j \otimes (T_j - T_{j,n})]_\cb \\
        &\leq \sum_{j=1}^k \nrm[\phi_j] \nrm[T_j - T_{j,n}] < \frac{1}{n} .
\end{split} \end{equation}

Now let $(\rho_\gamma)$ be a net of maps on \rcrs\ satisfying the conditions of the CBAP. By the above procedure we obtain a net of maps $(\rho'_{\gamma,n})$ on $\rcrs$ which are finite rank, with range in $\lspan \{ \pi(a)\lambda_t : a \in A,\ t \in G \}$. 
It is easily checked that $\rho'_{\gamma,n} \to \id$ in point-norm, using the product directed set.
As in (\ref{eq:cbnormfiniterank}) we have that each $\rho'_{\gamma,n}$ is completely bounded; by (\ref{eq:differencecbnormafteradjusting}) we have $\nrm[\rho_\gamma - \rho'_{\gamma , n}]_\cb < 1/n$ for all $\gamma$ and all $n \in \NN$, so $\nrm[\rho'_{\gamma,n}]_\cb < \nrm[\rho_\gamma]_\cb + 1/n$. 
Let $C = \sup_\gamma \nrm[\rho_\gamma]_\cb$ and define
\[
    \rho_{\gamma,n} := \frac{C}{C + 1/n} \rho'_{\gamma,n} ,
\]
so that $(\rho_{\gamma ,n})$ is a net satisfying the CBAP for $\rcrs$, uniformly bounded by $C$, and with range in $\lspan \{ \pi(a)\lambda_t : a \in A,\ t \in G \}$.
Define $F_{\gamma,n} : G \to \CB$ by
\begin{equation}\label{eq:HSmultassoctoCBAPmaps}
    F_{\gamma,n}(t)(a) := \EE \big( \rho_{\gamma,n}(\pi(a)\lambda_t) \lambda_t^* \big) , \quad a \in A,\ t \in G .
\end{equation}
It is easy to see that $\supp F_{\gamma,n} \subseteq \{ s^i_{j,n} : 1 \leq i \leq k_{j,n},\ 1 \leq j \leq k \}$.
As $\rho_{\gamma,n}$ is finite rank, with range spanned by finite sums of elements of the form $\pi(a)\lambda_r$ ($a \in A,\ r \in G$), it follows that each $F_{\gamma,n}(t)$ is a finite rank map on $A$, with $\ran F_{\gamma,n}(t) \subseteq \lspan \{ a \in A : \pi(a)\lambda_t \in \ran \rho_{\gamma,n} \}$.
Since $\rho_{\gamma,n} \to \id$ in point-norm we have, for all $t \in G,\ a \in A$,
\[
    F_{\gamma,n}(t)(a) =  \Big( \EE \big(\rho_{\gamma,n}(\pi(a)\lambda_t) \lambda_t^* \big) \Big) \to \EE \big( \pi(a)\lambda_{t t\inv} \big) = a .
\]
It remains to show that each $F_{\gamma ,n}$ is a Herz--Schur $(A,G,\alpha)$-multiplier and $\nrm[S_{F_{\gamma,n}}]_\text{cb} = \nrm[\rho_{\gamma,n}]_\text{cb}$. 
Write the completely bounded maps $\rho_{\gamma ,n}$ as  $\rho_{\gamma ,n}( \cdot ) = W_{\gamma ,n}^* \Psi_{\gamma ,n}( \cdot ) V_{\gamma ,n}$, where $V_{\gamma ,n},W_{\gamma ,n}$ are bounded operators and $\Psi_{\gamma ,n}$ is a representation.
To see that $F_{\gamma ,n}$ is a Herz--Schur $(A,G,\alpha)$-multiplier calculate
\[
\begin{split}
    \mathcal{N}(F_{\gamma ,n})(s,t)(a) &= \alpha_{t\inv} \Big( \EE \big( \rho_{\gamma ,n}(\pi(\alpha_t(a)) \lambda_{t s\inv}) \lambda_{s t\inv} \big) \Big) \\
        &= \alpha_{t\inv} \Big( \EE \big( \rho_{\gamma ,n}(\lambda_t \pi(a) \lambda_{s\inv}) \lambda_{s t\inv} \big) \Big) \\
        &= \EE \big( \lambda_{t\inv} \rho_{\gamma ,n}(\lambda_t \pi(a) \lambda_{s\inv}) \lambda_s \big) \\ 
        &= \EE \big( \lambda_{t\inv} W_{\gamma ,n}^* \Psi_{\gamma ,n}(\lambda_t) \Psi_{\gamma ,n} (\pi(a)) \Psi_{\gamma ,n}(\lambda_{s\inv}) V_{\gamma ,n} \lambda_s \big) \\
        &= U^* \lambda_{t\inv} W_{\gamma ,n}^* \Psi_{\gamma ,n}(\lambda_t) \Psi_{\gamma ,n} (\pi(a)) \Psi_{\gamma ,n}(\lambda_{s\inv}) V_{\gamma ,n} \lambda_s U \\
        &= \mathcal{W}_{\gamma ,n}(t)^* \Psi_{\gamma ,n}(\pi(a)) \mathcal{V}_{\gamma ,n}(s) ,
\end{split}
\]
where $U : \HH \to \ell^2(G) \otimes \HH ;\ \xi \mapsto \delta_e \otimes \xi$, and
\[
    \mathcal{V}_{\gamma ,n}(s) := \Psi_{\gamma ,n}(\lambda_{s\inv}) V_{\gamma ,n} \lambda_s U , \quad \mathcal{W}_{\gamma ,n}(t) := \Psi_{\gamma ,n}(\lambda_{t\inv}) W_{\gamma ,n} \lambda_t U ,
\]
so $F_{\gamma ,n}$ is a Herz--Schur $(A,G,\alpha)$-multiplier by Theorem~\ref{th:transference}.

For the norm equality let $(e_l)_\Lambda$ be an orthonormal basis for $\HH$,
\[
    V : \ell^2(G) \otimes \HH \to \ell^2(G) \otimes \ell^2(G) \otimes \HH ;\ \delta_g \otimes e_l \mapsto \delta_g \otimes \delta_g \otimes e_l ,
\]
where $\{ \delta_g : g \in G \}$ denotes the canonical orthonormal basis for $\ell^2(G)$, and let $\tau$ denote the coaction
\[
    \tau : \rcrs \to \rgCst \otimes_\text{min} \rcrs ;\ \pi(a)\lambda_t \mapsto \lambda^G_t \otimes \pi(a)\lambda_t ,
\]
for all $a \in A,\ t \in G$.
We claim
\begin{equation}\label{eq:dilationformofSFfornormequality}
    S_{F_{\gamma,n}}(x) = V^* (\id \otimes \rho_{\gamma,n})\tau(x) V , \quad x \in \rcrs ,
\end{equation}
which implies $S_{F_{\gamma ,n}}$ is completely bounded, with $\nrm[S_{F_{\gamma,n}}]_\mathrm{cb} = \nrm[\rho_{\gamma,n}]_\mathrm{cb}$.
To prove the claim we first assume $\rho_{\gamma,n}$ has one-dimensional range generated by $\pi(b)\lambda_r$ for some $b \in A,\ r \in G$.
Then, for $x,y \in G,\ l,m \in \Lambda$,
\[\begin{split}
    & \ip{V^* (\id \otimes \rho_{\gamma,n})\tau \big( \pi(a)\lambda_t \big) V(\delta_x \otimes e_m)}{\delta_y \otimes e_l} \\
    & \qquad \qquad = \ip{\lambda_t \otimes \rho_{\gamma,n} \big( \pi(a)\lambda_t \big)(\delta_x \otimes \delta_x \otimes e_m)}{\delta_y \otimes \delta_y \otimes e_l} \\
            & \qquad \qquad= \ip{\delta_{tx}}{\delta_y}\ip{\rho_{\gamma,n} \big( \pi(a)\lambda_t \big)(\delta_x \otimes e_m)}{\delta_y \otimes e_l} \\
            & \qquad \qquad= \ip{\delta_{tx}}{\delta_y}\ip{\pi(b)\lambda_r(\delta_x \otimes e_m)}{\delta_y \otimes e_l} \\
            & \qquad \qquad= \ip{\delta_{tx}}{\delta_y}\ip{\pi(b)\lambda_r(\delta_x \otimes e_m)(y)}{e_l} \\
            & \qquad \qquad= \ip{\delta_{tx}}{\delta_y}\ip{\alpha_{y\inv}(b)e_m}{e_l}\ip{\delta_{rx}}{\delta_y} .
\end{split}\]
On the other hand,
\[\begin{split}
   & \ip{S_{F_{\gamma,n}} \big( \pi(a)\lambda_t \big) (\delta_x \otimes e_m)}{\delta_y \otimes e_l} \\
            & \qquad = \ip{\pi \big(F_{\gamma,n}(t)(a) \big)\lambda_t(\delta_x \otimes e_m)}{\delta_y \otimes e_l} \\
            & \qquad = \ip{\pi \Big( \EE \big( \rho_{\gamma,n}(\pi(a)\lambda_t) \lambda_{t\inv} \big) \Big) \lambda_t(\delta_x \otimes e_m)}{\delta_y \otimes e_l} \\
            & \qquad = \ip{\pi \Big( \EE \big( \pi(b) \lambda_{r t\inv} \big) \Big) \lambda_t(\delta_x \otimes e_m)}{\delta_y \otimes e_l} \\
            & \qquad = \ip{\delta_r}{\delta_t}\ip{\pi(b)\lambda_t(\delta_x \otimes e_m)}{\delta_y \otimes e_l} \\
            & \qquad = \ip{\delta_r}{\delta_t}\ip{\alpha_{y\inv}(b)e_m}{e_l}\ip{\delta_{tx}}{\delta_y} .
\end{split}\]
It follows that $V^* (\id \otimes \rho_{\gamma,n})\tau ( \pi(a)\lambda_t ) V = S_{F_{\gamma,n}} ( \pi(a)\lambda_t )$.
By linearity and continuity we obtain (\ref{eq:dilationformofSFfornormequality}) when $\rho_{\gamma,n}$ has one-dimensional range. The linearity of the inner product then implies that (\ref{eq:dilationformofSFfornormequality}) holds in the general case that $\rho_{\gamma ,n}$ takes values in $\lspan \{ \pi(b_i)\lambda_{r_i} : i=1, \ldots , k \}$.
The equality $\nrm[S_{F_{\gamma ,n}}]_\text{cb} = \nrm[\rho_{\gamma ,n}]_\mathrm{cb}$ follows, so $(F_{\gamma ,n})$ is a net satisfying weak amenability of $(A,G,\alpha)$.
It also follows that $\Lambda_\cb(A,G,\alpha) \leq \Lambda_\cb(\rcrs)$.
\end{proof}

\begin{remark}\label{re:CHconstantsdegeneratesystems}
{\rm The constant $\Lambda_\cb$ introduced in Definition~\ref{de:weakamenabilityofsystem} reduces to the familiar constants defined in Section~\ref{sec:intro} in degenerate cases. 
Indeed, if $G$ is a discrete group such that the system $(\CC ,G,1)$ is weakly amenable then $G$ is weakly amenable by Remark~\ref{re:weakamenabilityofsystemimpliesofgroupandvNsystems} or Theorem~\ref{th:systemweaklyamenableiffnormclosedcrossedprodhasCBAP}; moreover, by Theorem~\ref{th:systemweaklyamenableiffnormclosedcrossedprodhasCBAP},
\[
    \Lambda_\cb(\CC ,G,1) = \Lambda_\cb(\rcros{\CC}{G}{1}) = \Lambda_\cb(\rgCst) = \Lambda_\cb(G) .
\]

Similarly, if the $C^*$-dynamical system $(A, \{ e \} , 1)$ is weakly amenable then
\[
    \Lambda_\cb(A, \{ e \} , 1) = \Lambda_\cb(\rcros{A}{\{ e \}}{1}) = \Lambda_\cb(A) .
\]
In fact, Sinclair--Smith~\cite[Theorem 3.4]{SS97} have shown that for an amenable discrete group $G$, $\Lambda_\cb(\rcros{A}{G}{\alpha}) = \Lambda_\cb(A)$, so when $(A,G,\alpha)$ is a discrete $C^*$-dynamical system with $G$ amenable we have
\[
    \Lambda_\cb(A,G,\alpha) = \Lambda_\cb(\rcros{A}{G}{\alpha}) = \Lambda_\cb(A) .
\] }
\end{remark}

We now turn to characterising weak amenability of $W^*$-dynamical systems.

\begin{lemma}\label{le:equivalentconvergenceforHSmults}
Let $(M,G,\beta)$ be a $W^*$-dynamical system, with $G$ a discrete group, and $(F_i)$ a net of Herz--Schur $\id$-multipliers of the underlying $C^*$-dynamical system $(M_\beta ,G,\beta)$.
The following are equivalent:
\begin{enumerate}[i.]
    \item $F_i(t)(a) \stackrel{w^*}{\to} a $ for all $t \in G,\ a \in M$ (equation (\ref{eq:defofweakamenabilityconvergenceWsystem}) above);
    \item $s_{F_i}u \to u$ in $\cfalg{M}{G}{\beta}$ for all $u \in \cfalg{M}{G}{\beta}$.
\end{enumerate}
\end{lemma}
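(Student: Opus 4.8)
The plan is to move between the two convergences through the duality between $\cfalg{M}{G}{\beta}$ and the crossed product. Write $N:=\vNcros{M}{G}{\beta}$, so that (by the discussion preceding Definition~\ref{de:weakamenabilityofsystem}) $\cfalg{M}{G}{\beta}=\cfalg[\id]{M_\beta}{G}{\beta}$ is the predual of $N$. Since each $F_i$ is a Herz--Schur $\id$-multiplier it is in particular a multiplier of $\cfalg{M}{G}{\beta}$, with bounded map $s_{F_i}$ whose adjoint $S_{F_i}=s_{F_i}^*$ is the weak*-continuous map on $N$ with $S_{F_i}(\pi(a)\lambda_t)=\pi(F_i(t)(a))\lambda_t$ (Lemma~\ref{le:multiplieroffourieralgiffmultiplierofsystem} and Corollary~\ref{co:HSmultsarecbmultsofFourieralg}). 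Using the pairing $\dualp{\pi(a)\lambda_t}{u}=u(t)(a)$ and normality of $\pi$ one gets $(s_{F_i}u)(t)(a)=u(t)(F_i(t)(a))$ for all $a\in M$; in particular $\supp(s_{F_i}u)\subseteq\supp u$, a fact I will use throughout.

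For (ii)$\Rightarrow$(i) the idea is to probe with ``point masses''. Given $\psi\in M_*$ and $t_0\in G$, write $\psi\circ\beta_{t_0}=\sum_n\omega_{h_n,k_n}$, where $\omega_{h,k}(x):=\ip{xh}{k}$ and $\sum_n\nrm[h_n]^2<\infty$, $\sum_n\nrm[k_n]^2<\infty$; then $\tilde u(T):=\sum_n\ip{T(\delta_e\otimes h_n)}{\delta_{t_0}\otimes k_n}$ defines an element $u\in\cfalg{M}{G}{\beta}$, and since $\pi(a)\lambda_t(\delta_e\otimes h)=\delta_t\otimes\beta_{t^{-1}}(a)h$ a short computation gives $u(t)=0$ for $t\neq t_0$ and $u(t_0)=\psi$. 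As $\nrm[\pi(a)\lambda_{t_0}]=\nrm[a]$, for every $a\in M$
\[
    |\psi(F_i(t_0)(a)-a)|=|\dualp{\pi(a)\lambda_{t_0}}{s_{F_i}u-u}|\le\nrm[a]\,\nrm[s_{F_i}u-u]_\Aa\longrightarrow0
\]
by (ii). Since $\psi\in M_*$, $t_0\in G$ and $a\in M$ were arbitrary, this is condition~(i); note that this implication needs no bound on the net.

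For (i)$\Rightarrow$(ii) I would first record the estimate $\nrm[v]_\Aa\le\sum_{t\in\supp v}\nrm[v(t)]_{M_*}$ for finitely supported $v\in\cfalg{M}{G}{\beta}$: the map $T\mapsto\sum_{t\in\supp v}v(t)\bigl(\EE(T\lambda_{t^{-1}})\bigr)$ is a normal functional on $N$, of norm at most $\sum_t\nrm[v(t)]_{M_*}$, agreeing with $\tilde v$ on the weak*-total set $\{\pi(a)\lambda_t:a\in M,\ t\in G\}$ and hence equal to $\tilde v$. Because $\sup_i\nrm[s_{F_i}]\le\sup_i\nrm[F_i]_\HS<\infty$ and the finitely supported functions are dense (Remark~\ref{re:compactsupportdenseinfourierspace}), it suffices to prove $\nrm[s_{F_i}u-u]_\Aa\to0$ for finitely supported $u$; for such $u$, combining $\supp(s_{F_i}u-u)\subseteq\supp u$ with the estimate gives
\[
    \nrm[s_{F_i}u-u]_\Aa\le\sum_{t\in\supp u}\nrm[(F_i(t))_*(u(t))-u(t)]_{M_*},
\]
so it remains to show $\nrm[(F_i(t))_*(u(t))-u(t)]_{M_*}\to0$ for each fixed $t$.

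This last step is the main obstacle: condition~(i) only yields $(F_i(t))_*(u(t))\to u(t)$ in the weak topology $\sigma(M_*,M)$, and one must upgrade this to norm convergence in $M_*$. I expect to do so using that for nets as in Definition~\ref{de:weakamenabilityofsystem} each $F_i(t)$ is finite rank, so $(F_i(t))_*$ has finite-dimensional range, together with the fact that in the predual of a von~Neumann algebra weak convergence plus convergence of norms forces norm convergence; this reduces matters to bounding $\limsup_i\nrm[(F_i(t))_*(u(t))]_{M_*}$ by $\nrm[u(t)]_{M_*}$, where the uniform bound on the net does the work. An alternative, should this prove awkward, is to reduce $u(t)$ to finite sums of (translated) vector functionals $\omega_{h,k}$ and compute $(F_i(t))_*$ explicitly from the factorisation of $\mathcal{N}(F_i)$ furnished by Theorem~\ref{th:characterisationofSchurAmultipliers}, estimating directly.
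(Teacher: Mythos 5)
Your proof of (ii)$\Rightarrow$(i) is correct and close to the paper's: the paper pairs $\pi(a)\lambda_t$ against an arbitrary $u$ and notes that $u(t)$ ranges over all of $M_*$, while you construct the singly supported $u$ with $u(t_0)=\psi$ explicitly; either works, and you are right that no uniform bound is needed there. Your reduction in (i)$\Rightarrow$(ii) --- density of finitely supported elements, uniform boundedness of the $s_{F_i}$, the estimate $\nrm[v]_\Aa\le\sum_{t\in\supp v}\nrm[v(t)]_{M_*}$, hence reduction to $\nrm[(F_i(t))_*(u(t))-u(t)]_{M_*}\to 0$ for fixed $t$ --- also parallels the paper, which reduces to singly supported $u$ and the identity $\nrm[s_{F_i}u-u]_\Aa=\nrm[u(s)\circ F_i(s)-u(s)]$.

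The gap is exactly where you put it, and neither of your proposed repairs closes it. Condition (i) gives only $\sigma(M_*,M)$-convergence of $(F_i(t))_*(u(t))$ to $u(t)$; the Radon--Riesz route needs $\nrm[(F_i(t))_*(u(t))]\to\nrm[u(t)]$, and while weak lower semicontinuity gives $\liminf\ge\nrm[u(t)]$, the uniform bound only gives $\limsup\le K\nrm[u(t)]$ with $K=\sup_i\nrm[F_i]_\HS$, which is useless when $K>1$. Worse, the weak-to-norm upgrade genuinely fails without further hypotheses: take $G$ trivial, $M=L^\infty[0,1]$, and $F_n(e)(f)=E_n(f)+\big(\int_0^1 f r_n\big)h$ with $E_n$ the dyadic conditional expectations, $r_n$ the Rademacher functions and $h\in L^\infty$ fixed. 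These are normal, finite rank, uniformly completely bounded, and satisfy (i), yet $(F_n(e))_*(g)=E_n(g)+\big(\int_0^1 hg\big)r_n$ satisfies $\liminf_n\nrm[(F_n(e))_*(g)-g]_1\ge\big|\int_0^1 hg\big|>0$ for suitable $g$. So this step cannot be pushed through from the stated hypotheses. For comparison, the paper's own proof writes $\nrm[u(s)\circ F_i(s)-u(s)]=\sup_{\nrm[a]\le1}|u(s)(F_i(s)(a)-a)|\to 0$ with no justification of the passage from pointwise to uniform convergence on the unit ball --- i.e.\ it asserts precisely the convergence you could not establish. You have therefore located a real gap, and it is one the paper shares; note that for the application in Theorem~\ref{th:characterisationweakamenabilityandCBAPcrossedproducts} only the weak convergence $\dualp{T}{s_{F_i}u}\to\dualp{T}{u}$ is actually used, and that does follow from (i).
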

\begin{proof}
(i)$\implies$(ii) By Remark~\ref{re:compactsupportdenseinfourierspace} finitely supported functions are dense in $\cfalg{M}{G}{\beta}$, so it suffices to prove the claim for singly supported $u \in \cfalg{M}{G}{\beta}$. Suppose $u \in \cfalg{M}{G}{\beta}$ is supported on $\{ s \}$ and $u(t)(a) = \sum_{n=1}^\infty \ip{\pi(a) \lambda_t \xi_n}{\eta_n}$ ($t \in G,\ a \in M$) for some families satisfying $\sum_{n=1}^\infty \nrm[\xi_n]^2 < \infty$ and $\sum_{n=1}^\infty \nrm[\eta_n]^2 < \infty$. Since $\lambda_s$ is an isometry it follows that the functional in $\pi(M)_*$ given by $\pi(a) \mapsto \sum_{n=1}^\infty \ip{\pi(a) \lambda_s \xi_n}{\eta_n}$ has the same norm as $u$; thus $\nrm[u(s)] = \nrm[u]_\Aa$.
Since $s_{F_i}u$ is also supported on $\{ s \}$ we have
\[
    \nrm[s_{F_i}u - u]_\Aa = \nrm[u(s) \circ F_i(s) - u(s)] = \sup_{\nrm[a] \leq 1} \left| u(s) \big( F_i(s)(a) - a \big) \right| \stackrel{i}{\to} 0 .
\]
Condition (ii) follows.

(ii)$\implies$(i) For any $a \in A,\ t \in G$ and $u \in \cfalg{M}{G}{\beta}$,
\[
    \big| \dualp{\pi \big( F_i(t)(a) \big) \lambda_t - \pi(a)\lambda_t}{u} \big| = \big| \dualp{\pi(a)\lambda_t}{s_{F_i}u} - \dualp{\pi(a)\lambda_t}{u} \big| \to 0 ,
\]
so $u(t)(F_i(t)(a)) \to u(t)(a)$. As $u$ varies $u(t)$ can take any value in $M_*$; thus $F_i(t)(a)$ converges to $a$ in the weak* topology. 
\end{proof}

\begin{theorem}\label{th:characterisationweakamenabilityandCBAPcrossedproducts}
Let $G$ be a discrete group, $M \subseteq \Bd[\HH_M]$ a \vNA\ acting on a separable Hilbert space, and $(M,G,\beta)$ a $W^*$-dynamical system.
Consider the conditions:
\begin{enumerate}[i.]
    \item $(M,G,\beta)$ is weakly amenable;
    \item $\vNcros{M}{G}{\beta}$ has the weak* completely bounded approximation property.
\end{enumerate}
Then (i)$\implies$(ii). If $G$ is weakly amenable then (i) and (ii) are equivalent.
\end{theorem}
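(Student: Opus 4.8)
\emph{Plan for (i)$\Rightarrow$(ii).} Since $G$ is discrete, $M_\beta=M$, so by the discussion preceding Definition~\ref{de:weakamenabilityofsystem} the Fourier space $\cfalg{M}{G}{\beta}$ is the predual of $\vNcros{M}{G}{\beta}$, and (Lemma~\ref{le:multiplieroffourieralgiffmultiplierofsystem}) a weak-amenability net $(F_i)$ for $(M,G,\beta)$ gives weak*-continuous completely bounded maps $S_{F_i}=s_{F_i}^*$ on $\vNcros{M}{G}{\beta}$ with $S_{F_i}(\pi(a)\lambda_t)=\pi(F_i(t)(a))\lambda_t$ and $\nrm[S_{F_i}]_\cb=\nrm[F_i]_\HS$. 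As $F_i$ is finitely supported and each $F_i(t)$ has finite rank, $\ran S_{F_i}\subseteq\lspan\{\pi(b)\lambda_t:t\in\supp F_i,\ b\in\ran F_i(t)\}$ is finite-dimensional, so $S_{F_i}$ is finite rank, and $\sup_i\nrm[S_{F_i}]_\cb<\infty$. For the point-weak* convergence $S_{F_i}\to\id$ I would invoke Lemma~\ref{le:equivalentconvergenceforHSmults} --- whose condition (i) is exactly~(\ref{eq:defofweakamenabilityconvergenceWsystem}), part of the definition of weak amenability --- to obtain $\nrm[s_{F_i}u-u]_\Aa\to0$ for every $u\in\cfalg{M}{G}{\beta}$, whence $\dualp{S_{F_i}x}{u}=\dualp{x}{s_{F_i}u}\to\dualp{x}{u}$. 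Thus $(S_{F_i})$ realises the weak* CBAP of $\vNcros{M}{G}{\beta}$ and $\Lambda_\cb(\vNcros{M}{G}{\beta})\le\Lambda_\cb(M,G,\beta)$.

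\emph{Plan for (ii)$\Rightarrow$(i) when $G$ is weakly amenable.} This direction I would model on the converse half of Theorem~\ref{th:systemweaklyamenableiffnormclosedcrossedprodhasCBAP}. Fix a weak* CBAP net $(R_\gamma)$ on $\vNcros{M}{G}{\beta}$ with $\sup_\gamma\nrm[R_\gamma]_\cb=C$, and a net $(\psi_j)\subseteq\Mcb\falg\cap C_c(G)$ with $\psi_j\to1$ pointwise and $\sup_j\nrm[\psi_j]_\Mcb=\Lambda$. By Lemma~\ref{le:equivalencesforclassicalHSmultiplier} each $F_{\psi_j}(t)(a):=\psi_j(t)a$ is a Herz--Schur $\id$-multiplier of $(M,G,\beta)$ with associated weak*-continuous completely bounded map $S_{\psi_j}$ on $\vNcros{M}{G}{\beta}$, acting by $\pi(a)\lambda_t\mapsto\psi_j(t)\pi(a)\lambda_t$ and with $\nrm[S_{\psi_j}]_\cb\le\Lambda$. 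Put $R_{\gamma,j}:=S_{\psi_j}\circ R_\gamma$: it is weak*-continuous, finite rank, completely bounded with $\nrm[R_{\gamma,j}]_\cb\le\Lambda C$, and --- because $\psi_j$ is finitely supported, so $S_{\psi_j}$ sends every element to a finite sum of terms $\pi(b)\lambda_r$ with $r\in\supp\psi_j$ --- the range of $R_{\gamma,j}$ is a finite-dimensional subspace of $\lspan\{\pi(b)\lambda_r:b\in M,\ r\in G\}$, which is the setting needed for the dilation argument of Theorem~\ref{th:systemweaklyamenableiffnormclosedcrossedprodhasCBAP}.

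Next I would set $F_{\gamma,j}:G\to\CBw[M]$ by $F_{\gamma,j}(t)(a):=\EE(R_{\gamma,j}(\pi(a)\lambda_t)\lambda_{t\inv})$ and check, as in Theorem~\ref{th:systemweaklyamenableiffnormclosedcrossedprodhasCBAP}: that $\supp F_{\gamma,j}\subseteq\supp\psi_j$; that, writing $R_\gamma=\sum_{i=1}^k\phi_i\otimes T_i$ with $\phi_i$ normal functionals, each $F_{\gamma,j}(t)$ has range in the span of the $t$-th Fourier coefficients of the $S_{\psi_j}(T_i)$, hence is a weak*-continuous finite rank completely bounded map on $M$; and that, with the isometry $V$ and the normal extension of the coaction $\tau:\vNcros{M}{G}{\beta}\to\vN\btens\vNcros{M}{G}{\beta}$, $\pi(a)\lambda_t\mapsto\lambda^G_t\otimes\pi(a)\lambda_t$, the multiplier map satisfies $S_{F_{\gamma,j}}(x)=V^*(\id\otimes R_{\gamma,j})\tau(x)V$, so $F_{\gamma,j}$ is a Herz--Schur $\id$-multiplier with $\nrm[F_{\gamma,j}]_\HS\le\nrm[R_{\gamma,j}]_\cb\le\Lambda C$. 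Finally, a Fourier-coefficient computation gives $F_{\gamma,j}(t)(a)=\psi_j(t)\,\EE(R_\gamma(\pi(a)\lambda_t)\lambda_{t\inv})$; since $y\mapsto\EE(y\lambda_{t\inv})$ is weak*-continuous and $R_\gamma(\pi(a)\lambda_t)\to\pi(a)\lambda_t$ weak*, while $|\psi_j(t)|\le\Lambda$ and $\psi_j(t)\to1$, passing to the product directed set yields $F_{\gamma,j}(t)(a)\stackrel{w^*}{\to}a$ for all $t\in G$, $a\in M$. Hence $(F_{\gamma,j})$ witnesses weak amenability of $(M,G,\beta)$, and together with (i)$\Rightarrow$(ii) the two conditions are equivalent when $G$ is weakly amenable.

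\emph{The main obstacle.} Unlike in Theorem~\ref{th:systemweaklyamenableiffnormclosedcrossedprodhasCBAP}, the span of the $\pi(a)\lambda_t$ is only weak*-dense, not norm-dense, in $\vNcros{M}{G}{\beta}$, so a finite rank map $R_\gamma$ on the crossed product cannot be norm-approximated by one with range in $\lspan\{\pi(b)\lambda_r\}$, and the symbol $t\mapsto\EE(R_\gamma(\pi(\cdot)\lambda_t)\lambda_{t\inv})$ need not be finitely supported. I expect this to be precisely where the hypothesis ``$G$ weakly amenable'' is unavoidable: composing with the finitely supported multipliers $S_{\psi_j}$ performs a Fej\'er-type truncation of $R_\gamma$ at the cost of only the bounded factor $\Lambda$. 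Beyond that, the remaining work is to transplant the dilation identity of Theorem~\ref{th:systemweaklyamenableiffnormclosedcrossedprodhasCBAP} to the von Neumann crossed product, which requires checking that the coaction $\tau$ extends to a normal $*$-homomorphism and that the amplification $\id\otimes R_{\gamma,j}$ remains weak*-continuous on $\vN\btens\vNcros{M}{G}{\beta}$ (de~Canni\`ere--Haagerup~\cite[Lemma 1.5]{dCH85}).
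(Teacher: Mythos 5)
Your proposal is correct and follows essentially the same route as the paper: (i)$\Rightarrow$(ii) via the predual identification and Lemma~\ref{le:equivalentconvergenceforHSmults}, and (ii)$\Rightarrow$(i) by extracting symbols $F(t)(a)=\EE\big(\rho(\pi(a)\lambda_t)\lambda_{t\inv}\big)$, transferring the dilation identity of Theorem~\ref{th:systemweaklyamenableiffnormclosedcrossedprodhasCBAP} to the normal coaction, and using weak amenability of $G$ to truncate the support. The only cosmetic difference is that you compose $S_{\psi_j}\circ R_\gamma$ before taking Fourier coefficients, whereas the paper first forms the (not necessarily finitely supported) multiplier $F_i$ and then multiplies pointwise by $\varphi_j$; since $S_{\varphi_j}\circ S_{F_i}$ is exactly the map associated to the pointwise product, these yield the same net, and your diagnosis of where weak amenability of $G$ enters matches the paper's Remark~\ref{re:weakstarHaagerupconstants}.
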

\begin{proof}
(i)$\implies$(ii) Suppose that $(F_i)$ is a net of Herz--Schur $\id$-multipliers of the underlying $C^*$-dynamical system $(M_\beta,G,\beta)$ satisfying Definition~\ref{de:weakamenabilityofsystem}. Then the associated net of maps $(S_{F_i})$ on $\vNcros{M}{G}{\beta}$ are completely bounded, weak*-continuous, and finite rank. Finally, using the identification of $(\vNcros{M}{G}{\beta})_*$ with $\cfalg{M}{G}{\beta}$, we have for any $u \in \cfalg{M}{G}{\beta}$ and any $T \in \vNcros{M}{G}{\beta}$
\[
    \dualp{S_{F_i}T}{u} = \dualp{T}{s_{F_i}u} \to \dualp{T}{u}
\]
by Lemma~\ref{le:equivalentconvergenceforHSmults}, so $S_{F_i}T$ converges to $T$ in the weak* topology.

(ii)$\implies$(i) Suppose $\vNcros{M}{G}{\beta}$ has the weak* CBAP.
Given a finite set $E \subseteq G$, $\epsilon >0$, and a collection $\Omega \subseteq M_*$, choose $\rho : \vNcros{M}{G}{\beta} \to \vNcros{M}{G}{\beta}$ such that
\begin{equation}\label{eq:HSmultsinweakCBAP}
    F : G \to \CBw[M_\beta] ;\ F(t)(a) := \EE \big( \rho(\pi(a)\lambda_t) \lambda_{t\inv} \big) , \quad a \in M,\ t \in G
\end{equation}
satisfies $| \omega(a - F(t)(a)) | < \epsilon$ for all $a \in M,\ t \in E,\ \omega \in \Omega$. In this way we produce a net $(F_i)$, indexed by triples of the form $(E,\epsilon,\Omega)$, such that $F_i(t)(a) \to a$ in the weak* topology.
For each $t \in G$, $F(t)$ defined above is a finite rank map on $M$ as in the proof of Theorem~\ref{th:systemweaklyamenableiffnormclosedcrossedprodhasCBAP}; indeed, suppose $\rho = \sum_{j=1}^k \phi_j \otimes T_j$, where $\phi_j$ is a functional and $T_j \in \vNcros{M}{G}{\beta}$. Then
\[
    F(t)(a) = \EE \big( \rho(\pi(a)\lambda_t) \lambda_{t\inv} \big) = \sum_{j=1}^k\phi_j \big( \pi(a)\lambda_t \big) \EE (T_j \lambda_{t\inv} ) ,
\]
so that $\{ \EE (T_j \lambda_{t\inv} ) : j=1 , \ldots , k \}$ span $\ran F(t)$.
Similar calculations to those in the proof of Theorem~\ref{th:systemweaklyamenableiffnormclosedcrossedprodhasCBAP} 
show that $\nrm[S_F]_\cb = \nrm[\rho]_\cb$ and $F$ is a Herz--Schur $(M_\beta,G,\beta)$-multiplier. 
Each $S_F$ is a composition of weak*-continuous maps, so is weak*-extendable.
We have that the net $(F_i)$ satisfies all the conditions of weak amenability of $(M,G,\beta)$ except that it may not be finitely supported.
To correct this we use the assumption that $G$ is weakly amenable.
Let $(\varphi_j)$ be a net of functions on $G$ implementing weak amenability.
Define another net, indexed by the product directed set,
\[
    F_{i,j} : G \to \CBw[M] ;\ F_{i,j}(t)(a) := \varphi_j(t) F_i(t)(a) , \quad t \in G,\ a \in M ,
\]
which is a net of Herz--Schur $\id$-multipliers of $(M_\beta,G,\beta)$, with $S_{F_{i,j}} = S_{\varphi_j} \circ S_{F_i}$.
From the properties of $\varphi_j$ and $F_i$ we have that each $F_{i,j}$ is finitely supported, $F_{i,j}(t)$ is finite rank for all $t \in G$, and $F_{i,j}(t)(a)$ converges to $a$ in the weak* topology.
Finally, $\nrm[F_{i,j}]_\HS = \nrm[S_{F_{i,j}}]_\cb \leq \nrm[S_{\varphi_j}]_\cb \nrm[S_{F_i}]_\cb$, so the net is uniformly bounded.
\end{proof}

\begin{remarks}\label{re:weakstarHaagerupconstants}
{\rm
\begin{enumerate}[i.]
	\item In the proof of (ii)$\implies$(i) above we required weak amenability of $G$; to see why this requirement arose let us return to the proof of Theorem~\ref{th:systemweaklyamenableiffnormclosedcrossedprodhasCBAP}.
There we are able to approximate in norm the operators $\rho_\gamma$, which implement the CBAP of $\rcrs$, by operators $\rho_{\gamma ,n}$ with finite-dimensional range spanned by elements of the form $\pi(a) \lambda_t$, such that $\nrm[\rho_{\gamma ,n}]_\cb$ is closely related to $\nrm[\rho_\gamma]_\cb$; these estimates allowed us to identify the support and Herz--Schur norm of $F_{\gamma ,n}$.
Such norm estimates are not available in the setting of Theorem~\ref{th:characterisationweakamenabilityandCBAPcrossedproducts}, so the extra hypothesis seems to be required to use the techniques in this paper.

	\item If in the above proof we make the stronger assumption that $\Lambda_\cb(G) = 1$ then the net $(\varphi_{i,n})$ may be chosen such that $\nrm[S_{\varphi_{i,n}}]_\cb$ is uniformly bounded by 1. With this assumption on $G$ we obtain $\Lambda^{\rm vN}_\cb(M,G,\beta) \leq \Lambda_\cb(\vNcros{M}{G}{\beta})$, where $\Lambda^{\rm vN}_\cb$ is the natural weak amenability constant of a $W^*$-dynamical system. It follows that if $\Lambda_\cb(G) = 1$ we have $\Lambda^{\rm vN}_\cb(M,G,\beta) = \Lambda_\cb(\vNcros{M}{G}{\beta})$.
It would be interesting to have a characterisation of when these two weak amenability constants coincide.
\end{enumerate}
}
\end{remarks}

Suppose that $(A,G,\alpha)$ is a $C^*$-dynamical system with $G$ an amenable discrete group and $A$ a nuclear \CA.
It is well known ({\it e.g.}\ Brown--Ozawa~\cite[Theorem 4.2.6]{BrO08}) that this implies $\rcrs$ is nuclear.
It is natural to ask if this fact persists for weak amenability and the CBAP: does the CBAP for $A$ and weak amenability of $G$ imply that $\rcrs$ has the CBAP?
Haagerup--Kraus \cite[Remark 3.10]{HKr94} give an example of a $W^*$-dynamical system showing that in general this is not true, which we reproduce here as a $C^*$-dynamical system.
Both $\SL(2,\ZZ)$ and $\ZZ^2$ are weakly amenable, but their semidirect product $\ZZ^2 \rtimes_\mu \SL(2,\ZZ)$ is not \cite[page 670]{HKr94} ($\mu$ denotes the usual action of $\SL(2,\ZZ)$ on $\ZZ^2$).
Since the \CA s $\rcros{\rgCst[\ZZ^2]}{\SL(2,\ZZ)}{\mu}$ and $\rgCst[\ZZ^2 \rtimes_\mu \SL(2,\ZZ)]$ are isomorphic it follows that the crossed product of a \CA\ with the CBAP by a weakly amenable group need not have the CBAP.

Sinclair--Smith~\cite{SS97} have shown that if $G$ is amenable and $A$ has the CBAP then $\rcrs$ has the CBAP.
To finish this paper we give an example of an additional assumption under which this implication can be recovered for weakly amenable groups.

\begin{proposition}\label{pr:weakamenabilitycovarianceequivalence}
Let $(A,G,\alpha)$ be a $C^*$-dynamical system with $G$ a discrete group. 
The following are equivalent:
\begin{enumerate}[i.]
    \item $G$ is weakly amenable, $A$ has the CBAP and the approximating maps $\phi_i : A \to A$ satisfy $\phi_i \circ \alpha_t = \alpha_t \circ \phi_i$ for all $t \in G$;
    \item $(A,G,\alpha)$ is weakly amenable and the approximating Herz--Schur $(A,G,\alpha)$-multipliers $F_i : G \to \CB$ satisfy $F_i(t)(\alpha_r(a)) = \alpha_r(F_i(t)(a))$ for all $r,t \in G$. 
\end{enumerate}
\end{proposition}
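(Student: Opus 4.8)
The plan is to prove the two implications directly, moving approximating nets between $G$, $A$, and the system.

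\textbf{(i)$\Rightarrow$(ii).} Suppose $(\varphi_j)\subseteq\Mcb A(G)\cap C_c(G)$ witnesses weak amenability of $G$ (so $\varphi_j\to 1$ pointwise and $\sup_j\nrm[\varphi_j]_\Mcb=C<\infty$) and $(\phi_i)$ is a net of finite-rank completely bounded maps on $A$ with $\phi_i\to\id_A$ in point-norm, $\sup_i\nrm[\phi_i]_\cb=D<\infty$, and $\phi_i\circ\alpha_t=\alpha_t\circ\phi_i$ for all $t$. I would define, on the product directed set, $F_{i,j}(t)(a):=\varphi_j(t)\,\phi_i(a)$. Finite support of $F_{i,j}$ (contained in $\supp\varphi_j$), finiteness of the rank of each $F_{i,j}(t)$, the covariance $F_{i,j}(t)\circ\alpha_r=\alpha_r\circ F_{i,j}(t)$, and the point-norm convergence $F_{i,j}(t)(a)\to a$ (split $\varphi_j(t)\phi_i(a)-a=\varphi_j(t)(\phi_i(a)-a)+(\varphi_j(t)-1)a$ and use $|\varphi_j(t)|\le\nrm[\varphi_j]_\Mcb$) are all routine. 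The substantive point is the uniform Herz--Schur bound. I would observe that the candidate map $\pi^\theta(a)\lambda^\theta_t\mapsto\pi^\theta(F_{i,j}(t)(a))\lambda^\theta_t$ on $\rcrs$ is the composition $S_{F_{\varphi_j}}\circ S_{\Phi_i}$, where $F_{\varphi_j}(t)(a)=\varphi_j(t)a$ and $\Phi_i(t)(a)=\phi_i(a)$ (constant in $t$). By Lemma~\ref{le:equivalencesforclassicalHSmultiplier}, $F_{\varphi_j}\in\HSm$ with $\nrm[F_{\varphi_j}]_\HS=\nrm[\varphi_j]_\Mcb\le C$. For $\Phi_i$, the covariance gives $\mathcal{N}(\Phi_i)(s,t)(a)=\alpha_{t\inv}\big(\phi_i(\alpha_t(a))\big)=\phi_i(a)$, constant in $(s,t)$; writing $\phi_i=W_i^*\rho_i(\cdot)V_i$ (the representation of the completely bounded map $\phi_i$, with $\rho_i$ a $*$-representation, $\rho_i(1_A)$ restricted out for non-degeneracy, and $\nrm[V_i]\,\nrm[W_i]=\nrm[\phi_i]_\cb$) puts $\mathcal{N}(\Phi_i)$ in the form of Theorem~\ref{th:characterisationofSchurAmultipliers}(ii) with constant $V,W$, so $\mathcal{N}(\Phi_i)$ is a Schur $A$-multiplier with $\nrm[\mathcal{N}(\Phi_i)]_\mathfrak{S}\le\nrm[\phi_i]_\cb$, whence by Theorem~\ref{th:transference} $\nrm[\Phi_i]_\HS\le D$. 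Composition then shows $F_{i,j}\in\HSm$ with $\nrm[F_{i,j}]_\HS\le CD$, so $(F_{i,j})$ witnesses weak amenability of $(A,G,\alpha)$ with the stated covariance.

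\textbf{(ii)$\Rightarrow$(i).} Suppose $(F_i)$ witnesses weak amenability of $(A,G,\alpha)$ with $F_i(t)(\alpha_r(a))=\alpha_r(F_i(t)(a))$ for all $r,t$. Weak amenability of $G$ is then exactly Remark~\ref{re:weakamenabilityofsystemimpliesofgroupandvNsystems}, whose hypothesis \eqref{eq:systemimpliesgroupcondition} is precisely this covariance condition. For the CBAP of $A$ I would take $\phi_i:=F_i(e)$: each $\phi_i$ is a finite-rank completely bounded map on $A$, $\phi_i(a)=F_i(e)(a)\to a$ in norm, and $\phi_i\circ\alpha_r=\alpha_r\circ\phi_i$ is the $t=e$ instance of the covariance. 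For the uniform bound, Theorem~\ref{th:transference} and Theorem~\ref{th:characterisationofSchurAmultipliers} furnish a representation $\mathcal{N}(F_i)(s,t)(a)=W_i(t)^*\rho_i(a)V_i(s)$ with $\esssup_s\nrm[V_i(s)]\,\esssup_t\nrm[W_i(t)]=\nrm[\mathcal{N}(F_i)]_\mathfrak{S}=\nrm[F_i]_\HS$; since $G$ is discrete each singleton has positive measure, and as $\mathcal{N}(F_i)(e,e)=F_i(e)=\phi_i$, evaluating at $(e,e)$ gives $\nrm[\phi_i]_\cb\le\nrm[W_i(e)]\,\nrm[V_i(e)]\le\nrm[F_i]_\HS$, so $\sup_i\nrm[\phi_i]_\cb<\infty$. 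Thus $(\phi_i)$ witnesses the CBAP of $A$ by maps commuting with the action.

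\textbf{Main obstacle.} Everything apart from one step is bookkeeping; the step carrying the content is showing that the map $\Phi_i$ (constant in $t$, value the completely bounded map $\phi_i$) is a Herz--Schur $(A,G,\alpha)$-multiplier of norm $\nrm[\phi_i]_\cb$. This is exactly where the equivariance $\phi_i\circ\alpha_t=\alpha_t\circ\phi_i$ is used: it collapses $\mathcal{N}(\Phi_i)$ to a constant function, which is then automatically a Schur $A$-multiplier via the structure theorem for completely bounded maps. It is also the exact point at which the implication would break without equivariance, matching the Haagerup--Kraus phenomenon recalled above. Along the way one can keep track of constants, obtaining $\Lambda_\cb(A,G,\alpha)\le\Lambda_\cb(G)\,\Lambda_\cb^{\mathrm{eq}}(A)$ and the reverse estimate, where $\Lambda_\cb^{\mathrm{eq}}(A)$ denotes the infimum of $\sup_i\nrm[\phi_i]_\cb$ over equivariant CBAP nets.
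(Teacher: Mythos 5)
Your proof is correct, and both directions end by producing exactly the nets the paper uses ($F_{i,j}(t)=\varphi_j(t)\phi_i$ in one direction, $\phi_i=F_i(e)$ in the other); the difference lies in how the Herz--Schur norm bounds are certified. For (i)$\Rightarrow$(ii) the paper does not argue at the multiplier level: it builds the maps $S_{v_\gamma}\circ\tilde{\phi_i}$ on $\thcrs$ (getting $\nrm[\tilde{\phi_i}]_\cb\le\nrm[\phi_i]_\cb$ from the identification of $\tilde{\phi_i}$ with a restriction of $I_{\ell^2(G)}\otimes\phi_i^\theta$ and de~Canni\`ere--Haagerup), checks these witness the CBAP of the crossed product, invokes Theorem~\ref{th:systemweaklyamenableiffnormclosedcrossedprodhasCBAP} to get weak amenability of the system, and only then computes that the resulting multipliers have the form $v_\gamma(t)\phi_i(a)$. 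You instead factor $S_{F_{i,j}}=S_{F_{\varphi_j}}\circ S_{\Phi_i}$ and verify directly that the constant-valued $\Phi_i$ lies in $\HSm$ with $\nrm[\Phi_i]_\HS\le\nrm[\phi_i]_\cb$, via Theorem~\ref{th:transference} plus the Haagerup--Paulsen--Wittstock representation of $\phi_i$ fed into Theorem~\ref{th:characterisationofSchurAmultipliers}(ii); this is legitimate (the equivariance is exactly what collapses $\mathcal{N}(\Phi_i)$ to a constant, and your handling of non-degeneracy/separability of $\rho_i$ is adequate), and it has the virtue of isolating where equivariance enters without detouring through the crossed-product CBAP machinery. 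For (ii)$\Rightarrow$(i) you coincide with the paper ($\phi_i=F_i(e)$ and Remark~\ref{re:weakamenabilityofsystemimpliesofgroupandvNsystems}, whose hypothesis (\ref{eq:systemimpliesgroupcondition}) is your covariance), except that where the paper asserts the CBAP bound is ``easily seen'' you supply an explicit estimate by evaluating the transference representation at $(e,e)$; a marginally shorter alternative is to note $\phi_i=\EE\circ S_{F_i}\circ\pi^\theta$ is a composition of completely bounded maps with $\nrm[\EE]_\cb=\nrm[\pi^\theta]_\cb=1$, giving $\nrm[\phi_i]_\cb\le\nrm[F_i]_\HS$ directly. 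Both approaches yield the same constants.
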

\begin{proof}
(i)$\implies$(ii) The condition on the maps $(\phi_i)$ implies that the map
\[
    \tilde{\phi_i} : \rcrs \to \rcrs ;\ \sum_t \pi(a_t)\lambda_t \mapsto \sum_t \pi \big( \phi_i(a_t) \big)\lambda_t , \quad a_t \in A,\ t \in G ,
\]
can be identified with the restriction of $I_{\ell^2(G)} \otimes \phi_i$ on $\Bd[\ell^2(G)] \otimes_\mathrm{min} A$ to \rcrs. It follows from \cite[Lemma 1.5]{dCH85} that $\tilde{\phi_i}$ is completely bounded and $\nrm[\tilde{\phi_i}]_\text{cb} \leq \nrm[\phi_i]_\text{cb}$.
Let $(v_\gamma)$ be a net of scalar-valued functions on $G$ satisfying weak amenability of $G$ and let $S_{v_\gamma}$ be the completely bounded map on \rcrs\ associated to the (classical) Herz--Schur multiplier $v_\gamma$ as in \cite[Proposition 4.1]{MTT18}. 
Denote by $S_{\gamma,i}$ the composition $S_{v_\gamma} \circ \tilde{\phi_i}$, which implement the CBAP for \rcrs; indeed if $\sup_i \nrm[\phi_i]_\text{cb} \leq C_1$ and $\sup_\gamma \nrm[v_\gamma]_\Mcb \leq C_2$ then $\sup \nrm[S_{\gamma,i}]_\text{cb} \leq C_1 C_2$, each $S_{\gamma,i}$ is finite rank, and for any $T \in \rcrs$
\[
\begin{split}
    \nrm[S_{\gamma,i}(T) - T] &\leq \nrm[S_{v_\gamma} (\tilde{\phi_i}(T)) - S_{v_\gamma}(T)] + \nrm[S_{v_\gamma}(T) - T] \\
        &\leq C_2\nrm[\tilde{\phi_i}(T) - T] + \nrm[S_{v_\gamma}(T) - T] \to 0 .
\end{split}\]
It follows from Theorem~\ref{th:systemweaklyamenableiffnormclosedcrossedprodhasCBAP} that the system $(A,G,\alpha)$ is weakly amenable.
To prove the covariance condition we first calculate the form of the Herz--Schur $(A,G,\alpha)$-multipliers defined in the proof of Theorem~\ref{th:systemweaklyamenableiffnormclosedcrossedprodhasCBAP}:
\[
\begin{split}
    F_{\gamma,i}(t)(a) &:= \Big( \EE \big( S_{\gamma,i}(\pi(a)\lambda_t)\lambda_t^* \big) \Big) \\
        &= \EE \Big( \pi \big( v_\gamma(t)\phi_i(a) \big) \Big) \\
        &= v_\gamma(t) \phi_i(a) .
\end{split}
\]
Thus, for any $r \in G$,
\[
    \alpha_r \big( F_{\gamma,i}(t)(a) \big) = v_\gamma(t)\alpha_r \big( \phi_i(a) \big) = v_\gamma(t) \phi_i \big(\alpha_r(a) \big) = F_{\gamma,i}(t) \big( \alpha_r(a) \big) .
\]

(ii)$\implies$(i) Let $(F_i)$ be a net of Herz--Schur $(A,G,\alpha)$-multipliers satisfying weak amenability of the system and the covariance condition. 
Weak amenability of $G$ follows as in Remark~\ref{re:weakamenabilityofsystemimpliesofgroupandvNsystems}.
Define
\[
    \phi_i : A \to A ;\ a \mapsto \EE \Big( S_{F_i} \big( \pi(a) \big) \Big) , \quad a \in A ,
\]
to obtain a net of maps easily seen to satisfy the CBAP for $A$. 
Now calculate
\[
\begin{split}
    \phi_i \big(\alpha_t(a) \big) &= \EE \Big( S_{F_i} \big( \pi(\alpha_t(a)) \big) \Big)
        = \EE \Big( \pi \big( F_i(e)\big( \alpha_t(a) \big) \big) \Big) 
        = \EE \Big( \pi \big( \alpha_t \big( F_i(e)(a) \big) \big) \Big) \\
        &= \alpha_t \big( F_i(e)(a) \big) 
        = \alpha_t \Big( \EE \big( S_{F_i}(\pi(a)) \big) \Big) 
        = \alpha_t \big( \phi_i(a) \big) ,
\end{split}
\]
as required.
\end{proof}

\noindent
{\bf Acknowledgements.} My sincere thanks to my advisor Ivan Todorov for his guidance during this work. I would also like to thank the EPSRC for funding my PhD position.

\bibliography{weakamenabilitybib}

\end{document}